\setlist[enumerate]{leftmargin=0pt, itemindent=17pt}
\setlist[itemize]{leftmargin=*}
\theoremstyle{plain}
\newtheorem{thm}{Theorem}[section]
\newtheorem{lem}[thm]{Lemma}
\newtheorem{cor}[thm]{Corollary}
\newtheorem{prop}[thm]{Proposition}
\theoremstyle{definition}
\newtheorem{dfn}[thm]{Definition}
\newtheorem{nttn}[thm]{Notation}
\theoremstyle{remark}
\newtheorem{rem}[thm]{Remark}
\begin{document}


\title{Jiang-Su Algebra as a Fra\"{i}ss\'{e} Limit}
\author[S.~Masumoto]{Shuhei MASUMOTO}
\address[S.~Masumoto]{Graduate~School~of~Mathematical~Sciences, the~University~of~Tokyo}
\email{masumoto@ms.u-tokyo.ac.jp}


\keywords{Fra\"{i}ss\'{e} theory; Jiang-Su algebra; UHF algebra}
\subjclass[2010]{Primary 47L40; Secondary 03C98}


\begin{abstract}
In this paper, we give a self-contained and quite elementary proof that 
the class of all dimension drop algebras together with their distinguished faithful traces 
forms a Fra\"{i}ss\'{e} class with the Jiang-Su algebra as its limit.  
We also show that the UHF algebras can be realized as Fra\"{i}ss\'{e} limits of 
classes of C*-algebras of matrix-valued continuous functions on $[0, 1]$ with faithful traces.  
\end{abstract}


\maketitle


\section{Introduction}


The Fra\"{i}ss\'{e} theory was originally invented by Roland Fra\"{i}ss\'{e}
in \cite{fraisse54:_extension_relations}, where a bijective correspondence between 
countable ultra-homogeneous structures and classes with certain properties 
of finitely generated structures is established.  
The classes and corresponding ultra-homogeneous structures in question 
are called \emph{Fra\"{i}ss\'{e} classes} and 
\emph{Fra\"{i}ss\'{e} limits} of the classes respectively.  


This theory has been, among the rest, a target of generalization to 
the setting of metric structures.  
For example, a general theory was developed in \cite{schoretsanitis07:_fraisse_theory}, 
including connections with bounded continuous logic.  
In \cite{yaacov15:_fraisse_limits}, Ita\"{i} Ben Yaacov concisely gave 
a self-contained presentation of a general theory, using a bright idea of approximate isomorphisms.  


These attempts at generalization ended up successfully, 
and a number of metric structures are recognized as Fra\"{i}ss\'{e} limits.  
Ita\"{i} Ben Yaacov~\cite{yaacov15:_fraisse_limits} pointed out that 
the Urysohn universal space, the separable infinite dimensional Hilbert space, 
and the atomless standard probability space are examples of Fra\"{i}ss\'{e} limits 
corresponding to suitable classes, and reconstructed discussion 
in \cite{kubis13:_proof_uniqueness} where the Gurarij space had been implicitly shown 
to be a Fra\"{i}ss\'{e} limit of the class of all finite dimensional Banach spaces.  
The latter result was quantized by Martino Lupini~\cite{lupini14:_uniqueness_universality}: 
it was shown that the noncommutative Gurarij space is the Fra\"{i}ss\'{e} limit of 
the class of all finite dimensional $1$-exact operator spaces.  


Among those instances are operator algebras.  In \cite{eagle15:_fraisse_limits}, 
a more generalized version of Fra\"{i}ss\'{e} theory for metric structures was presented, 
where the axioms of Fra\"{i}ss\'{e} class were relaxed, 
and so the bijective correspondence established in the original theory no longer holds 
and the limit structures would have less homogeneity, 
though it is still powerful as a construction method.  
Using this version, the authors of the paper succeeded in realizing 
a family of AF algebras including the UHF algebras, the hyperfinite II$_1$ factor 
and the Jiang-Su algebra as (generalized) Fra\"{i}ss\'{e} limits of 
a class of finite dimensional C*-algebras with distinguished traces, 
the class of finite dimensional factors and 
the class of dimension drop algebras with distinguished traces respectively.  


The Jiang-Su algebra was first constructed by Jiang and Su in \cite{jiang99:_simple_unital} 
as the unique simple monotracial C*-algebra among 
inductive limits of prime dimension drop algebras, 
which is KK-equivalent to the complex numbers $\mathbb{C}$.  
One of the most important properties of this algebra is that it is strongly self-absorbing, 
because of which it plays a key role in the Elliott's classification program of 
separable nuclear C*-algebras via K-theoretic invariants \cite{elliott08:_regularity_properties}.  
As is pointed out at the last section of \cite{eagle15:_fraisse_limits}, 
the proof that the Jiang-Su algebra satisfies this property is nontrivial, 
and there is a reasonable prospect that Fra\"{i}ss\'{e} theoretic view of this algebra 
will give a shortcut.  
However, the proof given in \cite{eagle15:_fraisse_limits} of the fact 
that the Jiang-Su algebra is a Fra\"{i}ss\'{e} limit was still
``a bit unsatisfactory'' in the authors' phrase, 
as it used the existence of the Jiang-Su algebra itself and 
relied heavily on Robert's theorem 
(see \cite[Remark 4.8 and Problem 7.2]{eagle15:_fraisse_limits}).  


In this paper, we prove that the collection of all the dimension drop algebras together with 
their distinguished faithful traces forms a Fra\"{i}ss\'{e} class.  
The importance lies in that this proof is self-contained and quite elementary; 
in particular, it depends on neither the existence of the Jiang-Su algebra nor 
Robert's theorem, so that it can be considered as a solution to 
\cite[Problem 7.2]{eagle15:_fraisse_limits}.    
Also, we show that the UHF algebras are realized as a Fra\"{i}ss\'{e} limit of 
a class of C*-algebras of matrix-valued continuous functions on 
the interval $[0, 1]$ together with their distinguished faithful traces.  
Since this class differs from the one used in \cite{eagle15:_fraisse_limits}, 
this result implies a different homogeneity of the UHF algebras.  


The paper consists of four sections.  
In the next section, we briefly introduce a version of Fra\"{i}ss\'{e} theory 
for metric structures, which is essentially the same as the one in \cite{eagle15:_fraisse_limits}.  
The third section contains the result on the UHF algebras.  
The argument included in this section is the basis of the fourth section, where 
the dimension drop algebras and the Jiang-Su algebra are dealt with.


\section{Fra\"{i}ss\'{e} Theory for Metric Structures}
\label{sec:_fraisse_theory}


In this section, we present a general theory of Fra\"{i}ss\'{e} limits 
in the context of metric structures, which is almost the same as the one 
in \cite[Section 2]{eagle15:_fraisse_limits}.  
The facts stated here are slight generalization of those of \cite{yaacov15:_fraisse_limits}, 
and can be proved with trivial modification.  


\begin{dfn}\label{dfn:_language_and_structure}
A \emph{language} $L$ consists of \emph{predicate symbols} and \emph{function symbols}.  
To each symbol in $L$ is associated a natural number called its \emph{arity}.  
We assume that $L$ contains a binary predicate symbol $d$.  


An \emph{$L$-structure} is a complete metric space $M$ together with an \emph{interpretation} of 
symbols of $L$: 
\begin{itemize}
	\item to each $n$-ary predicate symbol $P$ is assigned a continuous map 
		$P^M \colon M^n \to \mathbb{R}$, where the distinguished binary predicate symbol $d$ 
		corresponds to the distance function; and 
	\item to each $n$-ary function symbol $f$ is assigned a continuous map 
		$f^M \colon M^n \to M$.  
\end{itemize}
An \emph{embedding} of an $L$-structure $M$ into another $L$-structure $N$ is 
a map $\varphi$ such that 
\[
	f^N(\varphi(a_1), \ldots, \varphi(a_n)) = \varphi(f^M(a_1, \ldots, a_n))
\]
and 
\[
	P^N(\varphi(a_1), \ldots, \varphi(a_n)) = P^M(a_1, \ldots, a_n)
\]
hold for any function symbol $f$, any predicate symbol $P$ and any elements $a_1, \ldots, a_n$ in $M$.  
\end{dfn}


In this paper, we focus on unital C*-algebras with distinguished traces.  
We assume that $L$ consists of the binary predicate symbol $d$, 
an unary predicate symbol $\mathrm{tr}$, 
binary function symbols ${}+{}$ and ${}\cdot{}$, 
an unary function symbols $\lambda$ for each $\lambda \in \mathbb{C}$ 
which should be interpreted as multiplication by $\lambda$, 
an unary function symbol $*$, 
and $0$-ary function symbols $0$ and $1$.  
Then every unital C*-algebra with trace is understood as an $L$-structure in the canonical manner.  
Note that an embedding in the sense of Definition~\ref{dfn:_language_and_structure} is 
an injective \emph{trace-preserving} $*$-homomorphism in this case, 
which we shall call a \emph{morphism} in the sequel.  


\begin{rem}
The definition of languages and metric structures varies by paper 
(see \cite[Remark 2.2]{yaacov15:_fraisse_limits}), 
and the one we adopted here is the same as \cite[Definition 2.1]{yaacov15:_fraisse_limits}.  
Some variants such as \cite[Definition 2.1]{eagle15:_fraisse_limits} require 
that all the maps which appear should be bounded or uniformly continuous, 
in which case the language carries additional informations.  
A C*-algebra is seemingly not an instance of a metric structure in these cases, 
because it is apparently unbounded and the multiplication is not uniformly continuous.  
Indeed, this can be easily overcome by using the unit ball as its representative, 
as in \cite{eagle15:_fraisse_limits}.  
Anyway, the results of Fra\"{i}ss\'{e} theory in both perspectives can be easily translated to each other, 
so we are in the same line as \cite{eagle15:_fraisse_limits}.  
\end{rem}


\begin{dfn}
A class $\mathscr{K}$ of $L$-structures is said to satisfy
\begin{itemize}
	\item the \emph{joint embedding property (JEP)} 
		if for any $A, B \in \mathscr{K}$ there exists $C \in \mathscr{K}$ 
		such that both $A$ and $B$ can be embedded in $C$.  
	\item the \emph{near amalgamation property (NAP)} 
		if for any $A, B_1, B_2 \in \mathscr{K}$, 
		any embeddings $\varphi_i \colon A \to B_i$, any finite subset 
		$G \subseteq A$ and any $\varepsilon > 0$, 
		there exist embeddings $\psi_i$ of $B_i$ into some $C \in \mathscr{K}$ such that 
		$d(\psi_1 \circ \varphi_1 (a), \, \psi_2 \circ \varphi_2(a))$ is less than $\varepsilon$ for all $a \in G$.  
\end{itemize}
\end{dfn}


An $L$-structure $A$ is said to be \emph{finitely generated} 
if there exists a tuple $\vec{a} = (a_1, \ldots, a_n) \in A^n$ such that 
the smallest substructure of $A$ containing all $a_1, \ldots, a_n$ is $A$, 
for some $n \in \mathbb{N}$.  
(Note that we assumed $L$-structures to be necessarily complete.)
As we focus on unital C*-algebras with distinguished traces, 
this definition coincides with the usual one, that is, 
a (unital) C*-algebra (with its distinguished trace) 
is finitely generated if there exists a finite subset 
such that its closure by addition, multiplication, scalar multiplication and 
$*$-operation is dense in the whole C*-algebra.  


Let $\mathscr{K}$ be a class of finitely generated $L$-structures.  
For each $n \in \mathbb{N}$, we denote by $\mathscr{K}_n$ the class of 
all the pairs $(A, \vec{a})$, where $A$ is a member of $\mathscr{K}$ and $\vec{a} \in A^n$ is a generator of $A$.  
If $\mathscr{K}$ satisfies JEP and NAP, then we can define a pseudometric $d^\mathscr{K}$
on $\mathscr{K}_n$ by 
\[
	d^\mathscr{K}\bigl((A, \vec{a}), \, (B, \vec{b}) \bigr) := \inf  \max_i d(f(a_i), \, g(b_i)), 
\]
where $\vec{a} = (a_1, \ldots, a_n), \vec{b} = (b_1, \ldots, b_n)$ and 
the infimum is taken over all the embeddings $f, g$ of $A, B$ into some $C$ in $\mathscr{K}$.  


\begin{dfn}
A class $\mathscr{K}$ of finitely generated $L$-structures with JEP and NAP is said to satisfy 
\begin{itemize}
	\item the \emph{weak Polish property (WPP)} 
		if $\mathscr{K}_n$ is separable with respect to the pseudometric $d^\mathscr{K}$ for all $n$.  
	\item the \emph{Couchy Continuity Property (CCP)} if 
		\begin{enumerate}
			\item for any $n$-ary predicate symbol $P$, the map 
			$\bigl(A, (\vec{a}, \vec{b})\bigr) \mapsto P^A(\vec{a})$ 
			from $\mathscr{K}_{n+m}$ into $\mathbb{R}$ 
			sends Cauchy sequences to Cauchy sequences; and 
			\item for any $n$-ary function symbol $f$, the map 
			$\bigl(A, (\vec{a}, \vec{b})\bigr) \mapsto 
			\bigl(A, (\vec{a}, \vec{b}, f^A(\vec{a}))\bigr)$ from $\mathscr{K}_{n+m}$ into 
			$\mathscr{K}_{n+m+1}$ sends Cauchy sequences to Cauchy sequences.  
		\end{enumerate}
\end{itemize}
\end{dfn}


\begin{rem}\label{rem:_on_ccp}
CCP implies that $d^\mathscr{K}\bigl((A, \vec{a}), (B, \vec{b})\bigr) = 0$ holds if and only if 
there is an isomorphism between $A$ and $B$ sending $\vec{a}$ to $\vec{b}$ 
(\cite[Remark 2.13 (i)]{yaacov15:_fraisse_limits}).  
Note that if $\mathscr{K}$ is a class of finitely generated 
unital C*-algebras with traces and if it satisfies JEP and NAP, 
then it also satisfies CCP automatically, 
because all the relevant functions are $1$-Lipschitz on the unit ball.  
\end{rem}


\begin{dfn}
A class $\mathscr{K}$ of finitely generated $L$-structures is called a \emph{Fra\"{i}ss\'{e} class} if 
it satisfies JEP, NAP, WPP and CCP.  A \emph{Fra\"{i}ss\'{e} limit} of a Fra\"{i}ss\'{e} class $\mathscr{K}$ is 
a separable $L$-structure $M$ which is
\begin{enumerate}
	\item a \emph{$\mathscr{K}$-structure}: 
		for any finite subset $F$ of $M$ and any $\varepsilon>0$, 
		there exists an embedding $\varphi$ of a member of $\mathscr{K}$ such that 
		the $\varepsilon$-neighborhood of the image of $\varphi$ includes $F$.  
	\item \emph{$\mathscr{K}$-universal}: 
		every member of $\mathscr{K}$ can be embedded into $M$.  
	\item \emph{approximately $\mathscr{K}$-homogeneous}: 
		if $A$ is a member of $\mathscr{K}$ and $a_1, \ldots, a_n$ are elements of $A$, 
		then for any embeddings $\varphi, \psi$ of $A$ into $M$ and any $\varepsilon > 0$, 
		there exists an automorphism $\alpha$ of $M$ with 
		$d(\alpha \circ \varphi(a_i), \, \psi(a_i)) < \varepsilon$ for $i = 1, \ldots, n$.  
\end{enumerate}
\end{dfn}


The definition here is more relaxed than that of \cite{yaacov15:_fraisse_limits} and 
close to \cite[Definition 2.6]{eagle15:_fraisse_limits}: 
our Fra\"{i}ss\'{e} class is incomplete and lacks the hereditary property 
(see \cite[Definitions 2.5 (ii) and 2.12]{yaacov15:_fraisse_limits}).  
Consequently, we cannot establish a bijective correspondence 
between Fra\"{i}ss\'{e} classes and separable structures with homogeneity, 
which is a part of the main result of Fra\"{i}ss\'{e} theory.  
The following theorem summarizes what remains in our framework.  


\begin{thm}\label{thm:_main_result_of_fraiise_theory}
Every Fra\"{i}ss\'{e} class $\mathscr{K}$ admits a unique limit.  
Moreover, for any $L$-structure $A_0$ in $\mathscr{K}$, there exists a sequence of embeddings 
$A_0 \xrightarrow{\varphi_0} A_1 \xrightarrow{\varphi_1} A_2 \xrightarrow{\varphi_2} \cdots$ such that 
$A_i$ belongs to $\mathscr{K}$ for all $i$ and its inductive limit coincides with 
the Fra\"{i}ss\'{e} limit of $\mathscr{K}$.  
\end{thm}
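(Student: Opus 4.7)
The plan is to carry out the standard Fra\"{i}ss\'{e}-style construction adapted to the metric/approximate setting, essentially following \cite{yaacov15:_fraisse_limits}. I would construct the limit explicitly as the completion of a directed colimit of members of $\mathscr{K}$, then verify the three properties of a Fra\"{i}ss\'{e} limit by back-and-forth arguments, and finally deduce uniqueness by one further back-and-forth between any two limits.

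\textbf{Construction of the chain.} Starting from the given $A_0 \in \mathscr{K}$, I would build the chain $A_0 \xrightarrow{\varphi_0} A_1 \xrightarrow{\varphi_1} \cdots$ inductively. By WPP, for each $n \in \mathbb{N}$ I fix a countable $d^\mathscr{K}$-dense subset $\mathscr{D}_n \subseteq \mathscr{K}_n$. The key device is a bookkeeping scheme enumerating all ``tasks'' of the form: a member $(B,\vec{b}) \in \bigcup_n \mathscr{D}_n$, a tuple $\vec{a}$ of elements of some already-constructed $A_i$, and a rational tolerance $\varepsilon>0$. For each task, NAP---applied to the partial diagram consisting of the substructure of $A_i$ generated by $\vec{a}$ regarded as an approximate image of $\vec{b}$---yields some $A_{i+1}\in\mathscr{K}$, an embedding $\varphi_i : A_i \to A_{i+1}$, and an embedding $B\to A_{i+1}$ matching $\vec{b}$ with $\varphi_i(\vec{a})$ up to $\varepsilon$. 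A diagonal enumeration guarantees every future task is eventually handled. The Fra\"{i}ss\'{e} limit $M$ is the completion of the directed colimit of this chain; it is separable and tautologically a $\mathscr{K}$-structure.

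\textbf{Universality and homogeneity.} For $\mathscr{K}$-universality, given $B\in\mathscr{K}$ with generator $\vec{b}$, I approximate $(B,\vec{b})$ in $d^\mathscr{K}$ by a sequence drawn from $\mathscr{D}_n$, producing approximate morphisms into successive $A_i$'s via the tasks absorbed in the construction; CCP (through Remark~\ref{rem:_on_ccp}) converts the resulting Cauchy sequence of partial morphisms into an honest embedding $B \hookrightarrow M$. For approximate $\mathscr{K}$-homogeneity, given two embeddings $\varphi,\psi : A \to M$ of a fixed $A \in \mathscr{K}$ and $\varepsilon>0$, I run a back-and-forth \emph{inside} $M$: at each stage the absorbed tasks let me extend the current finite approximate isomorphism between expanding pieces of $M$ to a strictly better one, while the geometric sequence of tolerances prevents compounded error from exceeding $\varepsilon$; the limit is an automorphism $\alpha$ of $M$ with $d(\alpha\varphi(a_i),\psi(a_i))<\varepsilon$. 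Uniqueness then follows from one more back-and-forth: between two limits $M,M'$, $\mathscr{K}$-universality lets me embed fragments on each side while approximate homogeneity corrects mismatches stage by stage, producing an isomorphism in the limit.

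The main obstacle is the bookkeeping in the first step: the naive list of potential tasks is \emph{a priori} uncountable (arbitrary $(B,\vec{b},\vec{a},\varepsilon)$ with arbitrary $B\in\mathscr{K}$), and one must invoke WPP to cut down to a countable dense list and then use CCP \emph{twice}---once to lift dense approximations in $d^\mathscr{K}$ to genuine embeddings into $M$, once to keep values of function and predicate symbols under control along back-and-forth chains. Organising the error budgets across the two nested back-and-forths so that the final automorphism (resp.\ isomorphism) exists and satisfies the desired closeness is the delicate point, but is routine once the bookkeeping has been set up as above, and is carried out in this form in \cite[Section~2]{yaacov15:_fraisse_limits} and \cite[Section~2]{eagle15:_fraisse_limits}.
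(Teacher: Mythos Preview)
Your proposal is correct and matches the paper's own treatment: the paper does not give an independent proof of this theorem but explicitly states that the results of this section ``are slight generalization of those of \cite{yaacov15:_fraisse_limits}, and can be proved with trivial modification,'' which is precisely the route you outline. Your sketch of the bookkeeping construction, the use of WPP to reduce to countably many tasks, CCP to pass to limits, and the back-and-forth for homogeneity and uniqueness is the standard argument from \cite{yaacov15:_fraisse_limits} and \cite{eagle15:_fraisse_limits} that the paper is deferring to.
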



\section{UHF Algebras}
\label{sec:_uhf_algebras}


A \emph{supernatural number} is a formal product 
\[
	\nu = \prod_{p \text{: prime}} p^{n_p}, 
\]
where $n_p$ is either a non-negative integer or $\infty$ for each $p$ such that $\sum_p n_p = \infty$.  
In \cite[Theorem 1.12]{glimm60:_certain_class} it was proved that to each UHF algebra is associated 
a supernatural number as its complete invariant.  
Now, given a supernatural number $\nu$, 
we denote by $\mathbb{N}(\nu)$ the set of all natural numbers which formally divides $\nu$, 
and by $\mathscr{K}(\nu)$ the class of 
all the pairs $\langle C[0, 1] \otimes \mathbb{M}_n, \tau \rangle$, 
where $n$ is in $\mathbb{N}(\nu)$ and $\tau$ is a faithful trace 
on the C*-algebra $C[0, 1] \otimes \mathbb{M}_n$.  
Our goal in this section is to show that $\mathscr{K}(\nu)$ is a Fra\"{i}ss\'{e} class the limit of which is 
the UHF algebra with $\nu$ as its associated supernatural number.  


First, note that $C[0, 1] \otimes \mathbb{M}_n$ is canonically isomorphic to $C([0, 1], \ \mathbb{M}_n)$, 
the C*-algebra of all the continuous $\mathbb{M}_n$-valued functions on the interval $[0, 1]$.  
In the sequel, we shall denote this C*-algebra by $\mathcal{A}_n$ for simplicity.  
Next, let $\tau$ be a probability Radon measure on $[0, 1]$, 
which is identified with a state on $C[0, 1]$ by integration.  
Then $\tau \otimes \mathrm{tr}$ is clearly a trace on $\mathcal{A}_n$, 
where $\mathrm{tr}$ is the unique normalized trace on $\mathbb{M}_n$.  
It is easy to see that every trace on $\mathcal{A}_n$ is of this form, 
so a probability Radon measure on $[0, 1]$ may also be identified with a trace on $\mathcal{A}_n$.   
In the sequel, we simply write $\tau$ instead of $\tau \otimes \mathrm{tr}$ 
and use adjectives for measures and traces in common.  
For example, a measure is said to be faithful if its corresponding trace is faithful.  
Also, all the measures are assumed to be probability Radon measures 
so that they always correspond to traces.  


A measure is said to be \emph{diffuse} or \emph{atomless} if any measurable set of nonzero measure is 
parted into two measurable sets of nonzero measure.  
The following is often used in the sequel without referring.  


\begin{lem}\label{lem:_measures_and_monotone_functions}
Let $\sigma, \tau$ be faithful measures.  
If $\sigma$ is diffuse, then there exists a unique non-decreasing continuous function 
$\beta$ from $[0, 1]$ onto $[0, 1]$ with $\beta_*(\sigma) = \tau$.  
Moreover, $\tau$ is diffuse if and only if $\beta$ is a homeomorphism.  
\end{lem}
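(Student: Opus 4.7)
My plan is to construct $\beta$ via cumulative distribution functions. Set $F_\sigma(x) := \sigma([0,x])$ and $F_\tau(y) := \tau([0,y])$ for $x, y \in [0,1]$. Since $\sigma$ is faithful and diffuse, $F_\sigma$ is continuous and strictly increasing, hence a homeomorphism of $[0,1]$ onto itself. Faithfulness of $\tau$ makes $F_\tau$ strictly increasing as well, though it is only right-continuous in general and may have jumps at atoms of $\tau$. The function
\[
\beta(x) := \inf\{y \in [0,1] : F_\tau(y) \geq F_\sigma(x)\}
\]
is essentially the quantile function of $\tau$ reparametrized by $F_\sigma$, and I expect it to be the unique $\beta$ claimed.

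That $\beta$ is non-decreasing with $\beta(0)=0$ and $\beta(1)=1$ follows from the definition together with $F_\sigma(0)=0$ and $F_\sigma(1)=F_\tau(1)=1$. Using right-continuity of $F_\tau$, I would first establish the equivalence $\beta(x) \le y \Leftrightarrow F_\sigma(x) \le F_\tau(y)$, from which left-continuity of $\beta$ is immediate. Right-continuity of $\beta$ is the step requiring the most attention: if $\beta(x_n) \downarrow R > \beta(x_0)$ as $x_n \downarrow x_0$, then picking any $y \in (\beta(x_0), R)$ one obtains $F_\tau(y) \le F_\sigma(x_0) \le F_\tau(\beta(x_0))$, contradicting the strict monotonicity of $F_\tau$ provided by faithfulness of $\tau$. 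This is the only place where faithfulness of $\tau$ is used in an essential way, and it is the main technical obstacle.

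The pushforward identity then reduces to checking $\sigma(\beta^{-1}([0,y])) = \tau([0,y])$ for all $y$: the established equivalence yields $\beta^{-1}([0,y]) = [0, F_\sigma^{-1}(F_\tau(y))]$, whose $\sigma$-measure is $F_\tau(y)$. Since the intervals $[0,y]$ form a $\pi$-system generating the Borel $\sigma$-algebra of $[0,1]$, this suffices. For uniqueness, any other such $\beta'$ is a continuous non-decreasing surjection, so $(\beta')^{-1}([0,y])$ is a closed initial segment $[0, m(y)]$; the pushforward condition forces $F_\sigma(m(y)) = F_\tau(y)$, and since $F_\sigma$ is a homeomorphism this determines $m(y)$, whence $\beta' = \beta$ pointwise.

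For the final equivalence, a continuous non-decreasing surjection $[0,1] \to [0,1]$ is a homeomorphism iff it is strictly increasing. If $\tau$ has an atom at $y_0$, then $\beta^{-1}(\{y_0\})$ is a closed interval of positive $\sigma$-measure, hence of positive length by faithfulness of $\sigma$, and $\beta$ is constant there. Conversely, if $\beta$ is constant with value $y_0$ on a nondegenerate interval $[a,b]$, then $\tau(\{y_0\}) \ge \sigma([a,b]) > 0$ by faithfulness of $\sigma$, so $\tau$ has an atom.
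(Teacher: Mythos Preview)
Your argument is correct and follows essentially the same cumulative-distribution-function idea as the paper: the paper first reduces to the case where $\sigma$ is Lebesgue measure, defines $\alpha(t) = \tau([0,t))$, and takes $\beta$ to be the non-decreasing extension of $\alpha^{-1}$, then handles general $\sigma$ by composing $\beta_\tau \circ \beta_\sigma^{-1}$; your direct definition $\beta = F_\tau^{-1} \circ F_\sigma$ via the quantile function collapses these two steps into one. You also supply the continuity and uniqueness verifications that the paper leaves implicit, and your observation that right-continuity of $\beta$ is precisely where faithfulness of $\tau$ enters is a point the paper does not isolate.
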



\begin{proof}
We first assume $\sigma$ is equal to the Lebesgue measure $\lambda$ and set $\alpha(t) := \tau([0, t))$.  
Note that $\alpha$ is a strictly increasing upper semi-continuous function from $[0, 1]$ into $[0, 1]$.  
Let $\beta$ be the unique non-decreasing function extending $\alpha^{-1}$.  
Then 
\[
	\beta_*(\lambda)([0, t)) = \lambda\bigl(\beta^{-1}([0, t))\bigr) 
		= \lambda\bigl([0, \alpha(t))\bigr) = \alpha(t) = \tau([0, t)), 
\]
so $\beta_*(\lambda)$ is equal to $\tau$.  Also, if $\tau$ is diffuse, then $\alpha$ is continuous, 
whence $\beta = \alpha^{-1}$ is  a homeomorphism.  


For the general case, let $\beta_\sigma, \beta_\tau$ be such that 
$(\beta_\sigma)_*(\lambda) = \sigma$ and $(\beta_\tau)_*(\lambda) = \tau$.  
Then $\beta := \beta_\tau \circ \beta_\sigma^{-1}$ satisfies $\beta_*(\sigma) = \tau$, 
which completes the proof.  
\end{proof}


The next propositions are immediate corollaries of the preceding lemma.  
Recall that a morphism between elements of $\mathscr{K}(\nu)$ is 
an injective unital trace-preserving $*$-homomorphism.  


\begin{prop}\label{prop:_embeddability_lemma}
Let $\tau$ be a faithful trace on $\mathcal{A}_n$.  
Then for any  faithful diffuse trace $\sigma$ on $\mathcal{A}_n$, 
there is a morphism $\varphi \colon \langle \mathcal{A}_n, \tau \rangle \to \langle \mathcal{A}_n, \sigma \rangle$.   
\end{prop}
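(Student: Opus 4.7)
The natural plan is to pull back functions along the continuous surjection provided by the previous lemma. Since $\sigma$ is faithful and diffuse, Lemma~\ref{lem:_measures_and_monotone_functions} (applied with the roles ``$\sigma$'' and ``$\tau$'' of the lemma played by $\sigma$ and $\tau$ here) yields a non-decreasing continuous surjection $\beta \colon [0, 1] \to [0, 1]$ with $\beta_*(\sigma) = \tau$. I would then define
\[
	\varphi \colon \mathcal{A}_n \to \mathcal{A}_n, \qquad \varphi(f) := f \circ \beta,
\]
and check that $\varphi$ is a morphism in the sense defined above.

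The verification splits into three essentially routine steps. First, because $\beta$ is continuous and $\mathbb{M}_n$-valued composition is pointwise, $\varphi$ is a unital $*$-homomorphism of $\mathcal{A}_n$ into itself. Second, unfolding the identification of traces with measures on $[0, 1]$ and using the change-of-variables formula for the pushforward,
\[
	\sigma\bigl(\varphi(f)\bigr) = \int_{[0,1]} \mathrm{tr}\bigl(f(\beta(s))\bigr) \, d\sigma(s) = \int_{[0,1]} \mathrm{tr}(f(t)) \, d\beta_*(\sigma)(t) = \tau(f),
\]
so $\varphi$ is trace-preserving. Third, injectivity follows from surjectivity of $\beta$: if $\varphi(f) = 0$ then $f$ vanishes on $\beta([0,1]) = [0,1]$, whence $f = 0$.

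There is no real obstacle: the previous lemma does all the analytic work, and the only point that requires a moment's thought is that the surjectivity clause of that lemma (not just the pushforward identity) is exactly what is needed to make the pullback $\varphi$ injective. Note in particular that one does not need $\tau$ to be diffuse here, so $\beta$ need not be a homeomorphism; the asymmetry between $\sigma$ and $\tau$ in the hypothesis is precisely this point.
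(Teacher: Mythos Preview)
Your proposal is correct and matches the paper's proof exactly: the paper simply takes $\beta$ from Lemma~\ref{lem:_measures_and_monotone_functions} and sets $\varphi := \beta^*$, leaving the routine verifications (unital $*$-homomorphism, trace-preserving via the pushforward identity, injective via surjectivity of $\beta$) implicit. Your write-up just spells these out.
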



\begin{proof}
Let $\beta$ be the non-decreasing continuous function as 
in Lemma~\ref{lem:_measures_and_monotone_functions}.  
Then $\varphi := \beta^*$ is the desired morphism.  
\end{proof}


\begin{prop}\label{prop:_jep_for_matrix-valued_function_algebras}
The class $\mathscr{K}(\nu)$ satisfies JEP.  
\end{prop}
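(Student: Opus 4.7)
The plan is to combine the unital amplification of matrices with Proposition~\ref{prop:_embeddability_lemma}, using a diffuse faithful trace on the larger algebra as the ``universal target'' to absorb any trace via a rescaling of $[0,1]$.

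Concretely, given two objects $\langle \mathcal{A}_{n_1}, \tau_1\rangle$ and $\langle \mathcal{A}_{n_2}, \tau_2\rangle$ in $\mathscr{K}(\nu)$, I would set $n := \mathrm{lcm}(n_1, n_2)$. Since $n_1, n_2 \in \mathbb{N}(\nu)$ and the $p$-adic valuation of an lcm is the maximum of the individual valuations, $n$ again lies in $\mathbb{N}(\nu)$. On $\mathcal{A}_n$ I would place the faithful diffuse trace $\sigma$ corresponding to the Lebesgue measure on $[0,1]$, so that $\langle \mathcal{A}_n, \sigma\rangle \in \mathscr{K}(\nu)$.

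For each $i = 1, 2$, there is a canonical unital $*$-homomorphism $\iota_i \colon \mathcal{A}_{n_i} \to \mathcal{A}_n$ given pointwise by $F(t) \mapsto F(t) \otimes 1_{n/n_i}$, which is injective and compatible with the normalized matrix traces. A direct computation shows that the pullback trace $\sigma \circ \iota_i$ on $\mathcal{A}_{n_i}$ again corresponds to the Lebesgue measure on $[0,1]$, hence is faithful and diffuse. Proposition~\ref{prop:_embeddability_lemma} therefore supplies a morphism $\psi_i \colon \langle \mathcal{A}_{n_i}, \tau_i\rangle \to \langle \mathcal{A}_{n_i}, \sigma \circ \iota_i\rangle$, and composing yields a morphism $\varphi_i := \iota_i \circ \psi_i \colon \langle \mathcal{A}_{n_i}, \tau_i\rangle \to \langle \mathcal{A}_n, \sigma\rangle$, as required by JEP.

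There is no serious obstacle: the only delicate point is that the trace matching done by Lemma~\ref{lem:_measures_and_monotone_functions} takes place on $[0,1]$ and not across different matrix sizes, so one must keep the ``space'' and ``matrix'' directions cleanly separated. Choosing $\sigma$ diffuse on the larger algebra $\mathcal{A}_n$ and invoking Proposition~\ref{prop:_embeddability_lemma} on each $\mathcal{A}_{n_i}$ before amplifying is precisely what allows both given traces to be realized inside a single $\langle \mathcal{A}_n, \sigma\rangle$.
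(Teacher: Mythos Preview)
Your proof is correct and follows essentially the same strategy as the paper's own argument: use Proposition~\ref{prop:_embeddability_lemma} to pass from each $\langle \mathcal{A}_{n_i}, \tau_i\rangle$ to $\langle \mathcal{A}_{n_i}, \lambda\rangle$ with $\lambda$ the Lebesgue trace, then amplify into $\langle \mathcal{A}_n, \lambda\rangle$ for $n = \mathrm{lcm}(n_1, n_2)$. (The paper in fact writes $\gcd$ rather than $\mathrm{lcm}$, but this is evidently a typo, since amplification requires $n_i \mid n$; your choice of $\mathrm{lcm}$ and your verification that it lies in $\mathbb{N}(\nu)$ are correct.)
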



\begin{proof}
Suppose $n_1, n_2$ are in $\mathbb{N}(\nu)$ and put $n := \mathrm{gcd}(n_1, n_2)$.  
Then $n$ is also in $\mathbb{N}(\nu)$.  
Also, $\langle \mathcal{A}_{n_i}, \lambda \rangle$ is clearly embeddable into 
$\langle \mathcal{A}_n, \lambda \rangle$ by amplification.  
This fact together with Proposition~\ref{prop:_embeddability_lemma} 
implies that $\mathscr{K}(\nu)$ satisfies JEP.  
\end{proof}


Next, we shall show that the class $\mathscr{K}(\nu)$ satisfies NAP.  
For this, we begin with proving that all the morphisms of $\mathscr{K}(\nu)$ are 
approximately diagonalizable.  


\begin{dfn}
A morphism $\varphi \colon \langle \mathcal{A}_n, \tau \rangle \to \langle \mathcal{A}_m, \sigma \rangle$ is 
said to be \emph{diagonalizable} if there are a unitary $u \in \mathcal{A}_m$ and 
continuous maps $\xi_1, \ldots, \xi_k \colon [0, 1] \to [0, 1]$ such that 
\begin{equation}\label{eq:diag}
	\varphi(f) = u \,
		\begin{pmatrix}
			f \circ \xi_1 & & 0 \\
			 & \ddots & \\
			0 & & f \circ \xi_k
		\end{pmatrix}
		\, u^*
\end{equation}
for all $f \in \mathcal{A}_n$.  
\end{dfn}


In this paper, we shall call Eq.~(\ref{eq:diag}) a \emph{diagonal expression} of $\varphi$, and 
$u$ and $\xi_1, \ldots, \xi_k$ its \emph{associated unitary and maps}.  
Note that the union of the images of  the maps associated to a diagonal expression is equal to $[0, 1]$, 
as morphisms are necessarily faithful.  
Also, compositions of diagonalizable morphisms are again diagonalizable.  


\begin{prop}\label{prop:_diagonalizability_1}
Let $\varphi \colon \langle \mathcal{A}_n, \tau \rangle \to \langle \mathcal{A}_m, \sigma \rangle$ be 
a morphism.  Then for any finite subset $G \subseteq \mathcal{A}_n$ and any $\varepsilon > 0$, 
there exists a diagonalizable morphism $\psi \colon \langle \mathcal{A}_n, \tau \rangle \to 
\langle \mathcal{A}_m, \sigma \rangle$ with $\|\varphi(g) - \psi(g)\| < \varepsilon$ for all $g \in G$.  
Moreover, we can take $\psi$ so that the maps $\xi_1, \ldots, \xi_k$ 
associated to a diagonal expression of $\psi$ satisfies $\xi_1 \leq \cdots \leq \xi_k$.  
\end{prop}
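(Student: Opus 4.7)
The plan is to pass from $\varphi$ to its diagonal form pointwise, extract continuous spectral data, and then glue the pointwise data into a global diagonalizable morphism $\psi$. Concretely, for each $t\in[0,1]$ the evaluation $\mathrm{ev}_t\circ\varphi\colon\mathcal{A}_n\to\mathbb{M}_m$ is a unital $*$-homomorphism. Since the irreducible representations of $\mathcal{A}_n=C([0,1])\otimes\mathbb{M}_n$ are precisely the point evaluations $f\mapsto f(s)$ (each acting on $\mathbb{C}^n$), such a homomorphism decomposes as a finite direct sum of them. This forces $n\mid m$; writing $k:=m/n$, one obtains a multiset $\{\xi_1(t),\ldots,\xi_k(t)\}\subseteq[0,1]$ and a unitary $V(t)\in\mathbb{M}_m$ with
\[
\mathrm{ev}_t\circ\varphi(f)=V(t)\,\mathrm{diag}\!\bigl(f(\xi_1(t)),\ldots,f(\xi_k(t))\bigr)V(t)^*.
\]

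For continuity and ordering of the $\xi_i$'s, I would consider $h:=\varphi(\mathrm{id}_{[0,1]}\otimes 1_n)\in\mathcal{A}_m$, whose value $h(t)$ is a self-adjoint matrix with spectrum $\{\xi_1(t),\ldots,\xi_k(t)\}$, each eigenvalue of multiplicity $n$. The classical fact that the eigenvalues of a continuous family of self-adjoint matrices vary continuously when listed in non-decreasing order then supplies continuous $\xi_1\leq\cdots\leq\xi_k\colon[0,1]\to[0,1]$ realizing this spectral data. Moreover, because $\varphi$ is trace-preserving, computing the trace of $\varphi(\phi\otimes 1_n)$ for arbitrary $\phi\in C[0,1]$ in two ways yields $\tfrac{1}{k}\sum_i(\xi_i)_*\sigma=\tau$, so any diagonalizable morphism built from this particular choice of $\xi_i$'s is automatically trace-preserving.

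The principal obstacle is producing a continuous unitary $u\in\mathcal{A}_m$ that globalizes the pointwise $V(t)$ to within $\varepsilon$ on $G$, since $V(t)$ itself may jump at times where two of the $\xi_i(t)$ coincide. My approach is to choose a partition $0=t_0<t_1<\cdots<t_N=1$ fine enough that each $g\in G$ and each $\varphi(g)$ oscillates by at most $\varepsilon/3$ on every $[t_{j-1},t_j]$, and to pick a unitary $w_j\in\mathbb{M}_m$ at each $t_j$ diagonalizing $\mathrm{ev}_{t_j}\circ\varphi$ exactly in the ordering $\xi_1(t_j)\leq\cdots\leq\xi_k(t_j)$. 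Between consecutive partition points I would join $w_{j-1}$ and $w_j$ by a continuous unitary path $u$: on subintervals where all $\xi_i$'s stay pairwise distinct, the spectral projections of $h$ are continuous, so a natural choice of path makes the candidate $\psi(g)(t):=u(t)\,\mathrm{diag}\!\bigl(g(\xi_1(t)),\ldots,g(\xi_k(t))\bigr)u(t)^*$ agree with $\varphi(g)(t)$ exactly; at isolated coincidence times $t^*\in[t_{j-1},t_j]$ where $\xi_i(t^*)=\xi_{i+1}(t^*)$, the values $g(\xi_i(t^*))$ and $g(\xi_{i+1}(t^*))$ agree for each $g\in G$, so transposing the two blocks can be implemented by a local unitary path with error controlled by the modulus of continuity of the elements of $G$, and this error is absorbed into $\varepsilon$ after a further refinement of the partition. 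The resulting $\psi(f):=u\,\mathrm{diag}(f\circ\xi_1,\ldots,f\circ\xi_k)\,u^*$ is then the desired diagonalizable morphism.
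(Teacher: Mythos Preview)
Your overall strategy coincides with the paper's: extract the ordered continuous functions $\xi_1\le\cdots\le\xi_k$ from the spectrum of $h=\varphi(\mathrm{id}_{[0,1]}\otimes 1_n)$, observe that trace-preservation depends only on the $\xi_i$'s, then partition $[0,1]$ finely and glue local diagonalizing unitaries into a global $u\in\mathcal{A}_m$. The first two steps are correct and essentially identical to the paper's.

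The gap is in the gluing. Your case analysis presupposes that the coincidence set $\{t:\xi_i(t)=\xi_{i+1}(t)\text{ for some }i\}$ consists of isolated points, so that on each subinterval either the spectral projections of $h$ vary continuously or a single block transposition suffices. Nothing in the hypotheses prevents this set from being a Cantor set, from containing nondegenerate intervals, or from having several $\xi_i$'s collide simultaneously; in such situations your dichotomy is not exhaustive and the ``local transposition with small error'' patch does not apply as stated. The paper sidesteps any dependence on the multiplicity pattern by proving a uniform local perturbation claim: for each $t_0$ there is $\delta(t_0)>0$ such that whenever $|t-t_0|<\delta(t_0)$ one can find a unitary $w_{t_0}$ within $\varepsilon$ of the chosen $v_t$ which still diagonalizes $\mathrm{ev}_{t_0}\circ\varphi$. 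This is obtained by picking functions $f_1,\ldots,f_l$ separating the \emph{distinct} values among $\xi_1(t_0),\ldots,\xi_k(t_0)$, forming the matrix units $\{\mathrm{ev}_t\circ\varphi(f_i\otimes e_{p,q})\}$, and applying the standard perturbation-of-matrix-units argument (cf.\ \cite[Lemma~III.3.2]{davidson96:_cstar_algebras}). With this in hand, the unitary $u$ is built over a finite cover by setting $u(b_j)=v_{b_j}$ at one point of each overlap and, near each center $c_j$, letting $u$ vary inside the path-connected stabilizer $\{w:\mathrm{ev}_{c_j}\circ\varphi=\mathrm{Ad}(w)(\mathrm{diag}(\,\cdot\circ\xi_1(c_j),\ldots))\}$; this stabilizer is exactly what absorbs the ambiguity created by eigenvalue collisions. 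Replacing your spectral-projection argument by this matrix-unit perturbation is the missing ingredient.
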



\begin{proof}
For $t \in [0, 1]$, let $\mathrm{ev}_t \colon \mathcal{A}_m \to \mathbb{M}_m$ be 
the evaluation $*$-homomorphism.  
Then $\mathrm{ev}_t \circ \varphi$ is a unital $*$-homomorphism from $\mathcal{A}_n$ to 
the finite dimensional C*-algebra $\mathbb{M}_m$, so there exist a unitary $v_t \in \mathbb{M}_m$ 
and real numbers $s_1^t, \ldots, s_k^t \in [0, 1]$ such that the equation
\[
	\mathrm{ev}_t \circ \varphi (f) = \mathrm{Ad}(v_t) \bigl(\mathrm{diag}(f(s_1^t), \ldots, f(s_k^t)) \bigr)
\]
holds for all $f \in \mathcal{A}_n$.  Note that $\{\{ s_1^t, \ldots, s_k^t \}\}$ coincides with 
the spectra of $\mathrm{ev}_t \circ \varphi (\mathrm{id}_{[0, 1]} \otimes 1_{\mathbb{M}_n})$ 
as multisets.  
By continuity, if $t_1$ and $t_2$ are close to each other, then so are the spectrum of 
$\mathrm{ev}_{t_1} \circ \varphi (\mathrm{id}_{[0, 1]} \otimes 1_{\mathbb{M}_n})$ and 
$\mathrm{ev}_{t_2} \circ \varphi (\mathrm{id}_{[0, 1]} \otimes 1_{\mathbb{M}_n})$ 
with respect to the Hausdorff distance.  
Therefore, if we define 
\[
\begin{aligned}
	\xi_1(t) &:= \max \{\{s_1^t, \ldots, s_k^t\}\}; \\
	\xi_i(t) &:= \max \{\{s_1^t, \ldots, s_k^t\}\} 
		\setminus \{\{\xi_1(t), \ldots, \xi_{i-1}(t)\}\}, 
\end{aligned}
\]
then obviously $\xi_1, \ldots, \xi_k$ are continuous functions 
from $[0, 1]$ into $[0, 1]$ satisfying $\xi_1 \leq \cdots \leq \xi_k$.  


Next, fix $t_0 \in [0, 1]$.  We claim that there exists $\delta(t_0) > 0$ with the following property: 
if $|t - t_0| < \delta(t_0)$, then there exists a unitary $w_{t_0} \in \mathbb{M}_m$ with 
$\|v_t - w_{t_0}\| < \varepsilon$ such that the equation 
\[
	\mathrm{ev}_{t_0} \circ \varphi (f) = \mathrm{Ad}(w_{t_0}) 
		\bigl(\mathrm{diag}(f(s_1^{t_0}), \ldots, f(s_k^{t_0})) \bigr)
\]
holds for all $f \in \mathcal{A}_n$.  
To see this, let $s_1, \ldots, s_l$ be \emph{distinct} eigenvalues of 
$\mathrm{ev}_{t_0} \circ \varphi(\mathrm{id}_{[0, 1]} \otimes 1_{\mathbb{M}_n})$ and 
take mutually orthogonal non-negative continuous functions $f_1, \ldots, f_l$ such that 
$f_i$ is constantly equal to $1$ on some neighborhood of $s_i$ for each $i$.  
Note that if $\{e_{p, q}\}$ is a matrix unit of $\mathbb{M}_n$, 
then $\{\mathrm{ev}_{t_0} \circ \varphi(f_i \otimes e_{p, q})\}_{i, p, q}$ forms a matrix unit of 
$\mathrm{Im}(\mathrm{ev}_{t_0} \circ \varphi)$, 
and if $t$ is sufficiently close to $t_0$, then $\{\mathrm{ev}_t \circ \varphi(f_i \otimes e_{p, q})\}_{i, p, q}$ 
is a matrix unit of a subalgebra of $\mathrm{Im}(\mathrm{ev}_t \circ \varphi)$ which is close to 
$\{\mathrm{ev}_{t_0} \circ \varphi(f_i \otimes e_{p, q})\}_{i, p, q}$.  
Hence, as in the proof of \cite[Lemma III.3.2]{davidson96:_cstar_algebras}, 
we can find a unitary $w$ with $\|w-1\| < \varepsilon$ such that 
\[
	w(\mathrm{ev}_{t_0} \circ \varphi (f_i \otimes e_{p, q}))w^* = 
		\mathrm{ev}_t \circ \varphi (f_i \otimes e_{p, q}), 
\]
and $w_{t_0} := v_t w$ has the desired property.  


Now take $\delta_0 > 0$ sufficiently small so that the inequalities 
\[
	\|g \circ \xi_i(s) - g \xi_i(t)\| < \varepsilon, \qquad 
	\|\mathrm{ev}_s \circ \varphi(g)  - \mathrm{ev}_t \circ \varphi(g)\| < \varepsilon
\]
hold for all $g \in G$ whenever $|s-t| < \varepsilon$, and consider an open covering 
\[
	\mathscr{U} := \bigl\{ U_\delta(t) \mid t \in [0, 1] \ \& \ \delta < \min \{\delta(t), \delta_0\} \bigr\}
\]
of $[0, 1]$, where $U_\delta(t)$ denotes the open ball of radius $\delta$ and center $t$.  
Since $[0, 1]$ is compact, there exists a finite subcovering, say $\{I_1, \ldots, I_r\}$.  
We denote the center of $I_j$ by $c_j$, and without loss of generality, 
we may assume $c_1 < \cdots < c_r$ and $I_j \cap I_{j+1} \neq \varnothing$ for all $j$.  
Take $\eta > 0$ and $b_j \in I_j \cap I_{j+1} \cap (c_j+\eta, c_{j+1}-\eta)$ for each $j$, 
and find a unitary $u \in \mathcal{A}_m$ such that
\begin{itemize}
	\item $u(b_j)$ is equal to $v_{b_j}$ for all $j$; 
	\item the image of $u$ on $[c_j+\eta, c_{j+1}-\eta]$ is included in the $\varepsilon$-ball of center $u(b_j)$; 
		and
	\item the image of $u$ on $[c_j-\eta, c_j + \eta]$ is included in the path-connected subset
		\[
			\Bigl\{ w ~\Bigm|~ \mathrm{ev}_{c_j} \circ \varphi (f) = \mathrm{Ad}(w) 
			\bigl( \mathrm{diag}(f \circ \xi_1(c_j), \ldots, f \circ \xi_k(c_j)) \bigr) \Bigr\}
		\]
		of unitaries, 
\end{itemize}
which is possible by the claim we proved in the previous paragraph.  


We shall set 
\[
	\psi(f) := \mathrm{Ad}(u) \bigl( \mathrm{diag}(f \circ \xi_1, \ldots, f \circ \xi_k) \bigr) 
\]
and show that this $\psi$ has the desired property.  
First, it is clear from the definition of $\xi_i$ that $\psi$ is trace-preserving.  
Now, suppose $t \in [c_j + \eta, c_{j+1} - \eta]$ and $g \in G$.  
Without loss of generality, we may assume that the norm of $g$ is less than $1$.  
Then we have
\begin{align*}
	\mathrm{ev}_t \circ \psi(g) 
		&= \mathrm{Ad}(u(t)) \bigl( \mathrm{diag} (g \circ \xi_1(t), \ldots, g \circ \xi_k(t)) \bigr) \\
		&\sim_{3\varepsilon} \mathrm{Ad}(u(b_j)) 
		\bigl( \mathrm{diag} (g \circ \xi_1(b_j), \ldots, g \circ \xi_k(b_j)) \bigr) 
		= \mathrm{ev}_{b_j} \circ \varphi(g) \\
		&\sim_\varepsilon \mathrm{ev}_t \circ \varphi(g).  
\end{align*}
On the other hand, if $t \in [c_j - \eta, c_j+\eta]$, then 
\begin{align*}
	\mathrm{ev}_t \circ \psi(g) 
		&= \mathrm{Ad}(u(t)) \bigl( \mathrm{diag} (g \circ \xi_1(t), \ldots, g \circ \xi_k(t)) \bigr) \\
		&\sim_\varepsilon \mathrm{Ad}(u(t)) 
		\bigl( \mathrm{diag} (g \circ \xi_1(c_j), \ldots, g \circ \xi_k(c_j)) \bigr) 
		= \mathrm{ev}_{c_j} \circ \varphi(g) \\
		&\sim_\varepsilon \mathrm{ev}_t \circ \varphi(g).
\end{align*}
Consequently, it follows that $\|\varphi(g) - \psi(g)\| < 4\varepsilon$ for all $g \in G$, which completes the proof.  
\end{proof}


The following proposition is also an immediate corollary of 
Lemma~\ref{lem:_measures_and_monotone_functions}.  
However, as a preparation to the next section, 
we shall present a slightly ineffective proof.  


\begin{prop}\label{prop:_cutting-up_lamma}
Let $\tau, \sigma$ be faithful diffuse measures on $[0, 1]$.  
Then for any $n \in \mathbb{N}(\nu)$ and any $\varepsilon > 0$, 
there exist $m \in \mathbb{N}(\nu)$ and a diagonalizable morphism 
$\varphi \colon \langle \mathcal{A}_n, \tau \rangle \to \langle \mathcal{A}_m, \sigma \rangle$ 
such that the images of the maps associated to a diagonal expression of $\varphi$ 
have diameters less than $\varepsilon$.  
\end{prop}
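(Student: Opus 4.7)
The plan is to build $\varphi$ piecewise in the target direction: partition $[0,1]$ into $k$ consecutive intervals each carrying $\tau$-mass $1/k$, then apply Lemma~\ref{lem:_measures_and_monotone_functions} separately on every piece to obtain the maps $\xi_i$ entering the diagonal expression. This is exactly what makes the argument ``slightly ineffective'': instead of producing one monotone function that pushes $\sigma$ forward to $\tau$ directly, we chop the target and produce $k$ functions whose images are each confined to a small interval.

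First I would choose $k$ large enough that (i) $nk$ belongs to $\mathbb{N}(\nu)$ and (ii) every piece of the partition has diameter less than $\varepsilon$. For (i), since $\sum_p n_p = \infty$ while $n$ has a fixed finite prime factorization, the set of admissible $k$ is unbounded. For (ii), define $F(t) := \tau([0,t])$; as $\tau$ is faithful and diffuse, $F$ is a homeomorphism of $[0,1]$, hence uniformly continuous, so setting $a_i := F^{-1}(i/k)$ with $a_0 = 0$ and $a_k = 1$ gives $\max_i(a_i - a_{i-1}) \to 0$ as $k \to \infty$.

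Next, for each $i = 1, \ldots, k$, the measure $k\tau|_{[a_{i-1}, a_i]}$ is a faithful diffuse probability measure on $[a_{i-1}, a_i]$ (it is a probability measure by the choice of the $a_i$, and faithfulness and diffuseness descend from $\tau$). Identifying $[a_{i-1}, a_i]$ with $[0,1]$ via an affine homeomorphism and invoking Lemma~\ref{lem:_measures_and_monotone_functions} with the diffuse $\sigma$ as source, I would obtain a non-decreasing continuous surjection $\xi_i \colon [0,1] \to [a_{i-1}, a_i]$ with $(\xi_i)_* \sigma = k \tau|_{[a_{i-1}, a_i]}$. Setting $m := nk$ and defining
\[
\varphi(f) := \mathrm{diag}(f \circ \xi_1, \ldots, f \circ \xi_k)
\]
gives a diagonalizable unital $*$-homomorphism $\mathcal{A}_n \to \mathcal{A}_m$ whose associated unitary may be taken to be $1$ and whose associated maps have images of diameter less than $\varepsilon$. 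Trace preservation then follows by splitting the integral:
\[
\sigma(\varphi(f)) = \frac{1}{k}\sum_{i=1}^{k} \int (\mathrm{tr}_n \circ f) \circ \xi_i \, d\sigma = \frac{1}{k}\sum_{i=1}^{k} \int \mathrm{tr}_n \circ f \, d(k\tau|_{[a_{i-1}, a_i]}) = \tau(f),
\]
and injectivity holds because $\bigcup_i \xi_i([0,1]) = \bigcup_i [a_{i-1}, a_i] = [0,1]$.

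I do not anticipate any serious obstacle: the whole argument is a careful unpacking of Lemma~\ref{lem:_measures_and_monotone_functions} applied once per interval. The only mild subtlety is balancing the arithmetic condition $nk \in \mathbb{N}(\nu)$ against the diameter condition, but this is immediate from the hypothesis $\sum_p n_p = \infty$ on supernatural numbers.
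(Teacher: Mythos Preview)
Your argument is correct: partition $[0,1]$ into $k$ intervals of equal $\tau$-mass, apply Lemma~\ref{lem:_measures_and_monotone_functions} on each piece to get $\xi_i$ with $(\xi_i)_*\sigma = k\tau|_{[a_{i-1},a_i]}$, and verify the trace identity and injectivity as you did. The balancing of the two conditions on $k$ is indeed harmless, since the admissible $k$ are cofinal in $\mathbb{N}$ and $\max_i(a_i-a_{i-1})\to 0$.

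However, the route differs from the paper's. The paper deliberately does \emph{not} cut into $k$ pieces at once. Instead it cuts $[0,1]$ into only two subintervals $[0,t_0]$ and $[t_0,1]$ of $\tau$-masses $r/k$ and $(k-r)/k$, builds a morphism whose associated maps have images of diameter $<2/3$, and then \emph{iterates} this step $d$ times to push the diameters below $\varepsilon$. Your one-shot $k$-piece argument is shorter and is essentially the ``immediate corollary of Lemma~\ref{lem:_measures_and_monotone_functions}'' that the paper alludes to before announcing its ``slightly ineffective'' proof. (So your gloss on that phrase is off: the paper is calling its own iterative construction ineffective, not the chopping idea.) What the paper's detour buys is a template for Section~4: in Proposition~\ref{prop:_cutting-up_lamma_2} the same shrink-and-iterate scheme is reused with a three-piece cut tailored to the congruence constraints of Lemma~\ref{lem:_arithmetic_condition}, where a direct equal-mass $k$-partition would not obviously satisfy those constraints.
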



\begin{proof}
Since $\tau$ is diffuse, there exists $\delta > 0$ such that $\tau([t_1, t_2]) < \delta$ implies $|t_2 - t_1| < 1/6$.  
Take $k \in \mathbb{N}$ so that $m_1 := nk$ is in $\mathbb{N}(\nu)$ and $1/k$ is smaller than $\delta$.  
Then there are $t_0 \in (1/2, 2/3)$ and $r \in \mathbb{N}$ with $\tau([0, t_0]) = r/k$.  
We set $\tau_1 := \frac{k}{r}\tau|_{[0, t_0]}$ and $\tau_2 := \frac{k}{k-r}\tau_{[t_0, 1]}$, 
so $\tau = \frac{r}{k}\tau_1 + \frac{k-r}{k}\tau_2$.  
By Lemma~\ref{lem:_measures_and_monotone_functions}, 
one can find increasing maps $\eta_1 \colon [0, 1] \to [0, t_0]$ and $\eta_2 \colon [0, 1] \to [t_0, 1]$ 
such that $\tau_i$ is equal to $(\eta_i)_*(\sigma)$ for $i = 1, 2$.  
We set 
\[
	\xi^1_j := \left\{
		\begin{array}{ll}
			\eta_1 & \text{if } j = 1, \ldots, r, \\
			\eta_2 & \text{if } j = r+1, \ldots, k,   
		\end{array}
		\right.
\]
and define $\varphi_1 \colon \mathcal{A}_n \to \mathcal{A}_{m_1}$ by 
\[
	\varphi_1(f) = \mathrm{diag}(f \circ \xi^1_1, \ldots, f \circ \xi^1_k).  
\]
Then it can be easily verified that $\varphi_1$ is a morphism from $\langle \mathcal{A}_n, \tau \rangle$ to 
$\langle \mathcal{A}_{m_1}, \sigma \rangle$, 
and that the images of the maps $\xi^1_1, \ldots, \xi^1_k$ are either $[0, t_0]$ or $[t_0, 1]$, 
so their diameters are less than $2/3$.  


Now take $d \in \mathbb{N}$ large enough so that $(2/3)^d$ is less than $\varepsilon$, 
and repeat the procedure above for $d$ times to obtain a sequence 
\[
	\langle \mathcal{A}_n, \tau \rangle \xrightarrow{\ \varphi_1 \ }
	\langle \mathcal{A}_{m_1}, \sigma \rangle \xrightarrow{\ \varphi_2 \ } \cdots 
	\xrightarrow{\varphi_{d-1}} \langle \mathcal{A}_{m_d}, \sigma \rangle.  
\]
Then $\varphi := \varphi_{d-1} \circ \cdots \circ \varphi_1$ has the desired property.  
\end{proof}


\begin{prop}\label{prop:_nap_for_matrix-valued_function_algebras}
The class $\mathscr{K}(\nu)$ satisfies NAP.  
\end{prop}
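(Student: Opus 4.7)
The plan is to combine Propositions~\ref{prop:_embeddability_lemma}, \ref{prop:_jep_for_matrix-valued_function_algebras}, \ref{prop:_diagonalizability_1} and \ref{prop:_cutting-up_lamma} with a measure-theoretic matching argument for sorted diagonalizations. Given morphisms $\varphi_i \colon \langle \mathcal{A}_n, \tau\rangle \to \langle \mathcal{A}_{m_i}, \sigma_i\rangle$, a finite $G \subseteq \mathcal{A}_n$ and $\varepsilon > 0$, I first apply Proposition~\ref{prop:_diagonalizability_1} to replace each $\varphi_i$ by a diagonalizable morphism $\tilde\varphi_i$ with pointwise-sorted associated maps $\xi^i_1 \leq \cdots \leq \xi^i_{k_i}$, agreeing with $\varphi_i$ on $G$ up to $\varepsilon/4$. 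Then by Proposition~\ref{prop:_jep_for_matrix-valued_function_algebras}, whose proof goes through Proposition~\ref{prop:_embeddability_lemma} followed by amplification, I obtain diagonalizable embeddings $\psi_i^{(0)} \colon \langle \mathcal{A}_{m_i}, \sigma_i\rangle \to \langle \mathcal{A}_{N_0}, \lambda\rangle$ into a common target carrying the Lebesgue measure $\lambda$.

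Next, I invoke Proposition~\ref{prop:_cutting-up_lamma} with source and target $\langle \mathcal{A}_{N_0}, \lambda\rangle$ to produce a diagonalizable morphism $\mu \colon \langle \mathcal{A}_{N_0}, \lambda\rangle \to \langle \mathcal{A}_{N_1}, \lambda\rangle$ whose associated maps have image-diameter less than a parameter $\delta$, chosen small relative to the joint modulus of continuity of the finitely many intermediate maps and of the elements of $G$. The composition $\Psi_i := \mu \circ \psi_i^{(0)} \circ \tilde\varphi_i$ is then diagonalizable; sorting its associated maps pointwise at each $t \in [0,1]$ (which can be done continuously), I may write
\[
	\Psi_i(f)(t) = w_i(t) \, \mathrm{diag}\bigl(f \circ \chi^i_1(t), \ldots, f \circ \chi^i_K(t)\bigr) \, w_i(t)^*,
\]
where $K := N_1/n$, $\chi^i_1 \leq \cdots \leq \chi^i_K$ pointwise, and each $\chi^i_l$ has image of diameter controlled by $\delta$ and the chosen modulus of continuity.

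The heart of the argument is a measure-theoretic matching. Trace-preservation of $\Psi_i$ is equivalent to the barycenter identity $\tau = \tfrac{1}{K}\sum_l (\chi^i_l)_* \lambda$. Since each $\chi^i_l$ is nearly constant at some value $c^i_l$ and the pointwise sorting of the $\chi^i_l$ forces approximate sorting of the $c^i_l$, the tuple $(c^i_l)$ must approximate the $\tau$-quantiles at levels $l/K$ up to error $O(\delta + 1/K)$. In particular $\|\chi^1_l - \chi^2_l\|_\infty$ can be made arbitrarily small, so $\|\mathrm{diag}(g \circ \chi^1_l) - \mathrm{diag}(g \circ \chi^2_l)\| < \varepsilon/4$ for $g \in G$. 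To align the remaining unitaries, I set $\psi_1 := \mathrm{Ad}(w_2 w_1^*) \circ \mu \circ \psi_1^{(0)}$ and $\psi_2 := \mu \circ \psi_2^{(0)}$; both are trace-preserving embeddings of $\langle \mathcal{A}_{m_i}, \sigma_i\rangle$ into $\langle \mathcal{A}_{N_1}, \lambda\rangle \in \mathscr{K}(\nu)$, and the triangle inequality yields $\|\psi_1 \varphi_1(g) - \psi_2 \varphi_2(g)\| < \varepsilon$ for all $g \in G$.

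The main obstacle is the matching step: one must rigorously establish that sorted tuples of nearly-constant maps which barycentrically average, against a diffuse measure, to the same probability measure $\tau$ must themselves be pointwise close, with an explicit dependence on $\delta$ and $1/K$. Everything else is routine composition and approximation bookkeeping using the preceding propositions.
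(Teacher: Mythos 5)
Your route is essentially the paper's: reduce to a common target via JEP, diagonalize with sorted associated maps via Proposition~\ref{prop:_diagonalizability_1}, shrink the images of the associated maps via Proposition~\ref{prop:_cutting-up_lamma}, match the sorted maps of the two compositions using trace-preservation, and finish by conjugating one embedding by the unitary $u_1u_2^*$ coming from the two diagonal expressions. The structure and the error bookkeeping are fine.

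However, the step you explicitly leave open (``one must rigorously establish that sorted tuples of nearly-constant maps which barycentrically average to the same measure must be pointwise close'') is precisely the crux of the paper's proof, so you should be aware that your quantile heuristic, as stated, is not quite the right formulation: the claim is not that each $c^i_l$ approximates the $l/K$-quantile to within $O(\delta+1/K)$ \emph{in value} (that would require a uniform modulus for the inverse of the distribution function), but rather that the \emph{corresponding} components of the two families differ by at most $\delta$. The paper closes this by contradiction: if $\xi^1_j(t)\ge \xi^2_j(t)+\delta$ at some $t$, set $c:=\max\xi^2_j$ and $d:=\min\xi^1_{j+1}$; the image-diameter bound $\delta/3$ forces $d\ge c+\delta/3$, sortedness forces all of $\xi^2_1,\dots,\xi^2_j$ into $[0,c]$ and forbids $\xi^1_l$ for $l>j$ from meeting $[0,d)$, and then trace-preservation gives $j\le K\,\tau_0([0,c])<K\,\tau_0([0,d))\le j$, the strict middle inequality coming from \emph{faithfulness} of $\tau_0$ (the interval $(c,d)$ has positive mass). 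Faithfulness is the ingredient your sketch does not mention and without which the matching fails. A second, minor point: after composing with $\mu$ you cannot in general rewrite $\Psi_i$ \emph{exactly} with pointwise-sorted maps and a continuous unitary (the conjugating unitary may be forced to jump where branches cross); you must reapply Proposition~\ref{prop:_diagonalizability_1} and absorb another $\varepsilon$-perturbation, as the paper does. Your error budget accommodates this, but the claim ``which can be done continuously'' should be replaced by a citation of that proposition.
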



\begin{proof}
Let $\varphi_1, \varphi_2$ be morphisms from $\langle \mathcal{A}_{n_0}, \tau_0 \rangle$ 
into $\langle \mathcal{A}_{m'}, \sigma' \rangle$, $\langle \mathcal{A}_{m''}, \sigma'' \rangle$ 
respectively, and $G$ be a finite subset of $\mathcal{A}_{n_0}$.  
Our goal is to show that given $\varepsilon > 0$, we can find morphisms $\psi_1$ and $\psi_2$ 
from $\langle \mathcal{A}_{m'}, \sigma' \rangle$ and $\langle \mathcal{A}_{m''}, \sigma'' \rangle$ 
respectively into some $\langle \mathcal{A}_{n_2}, \tau_2 \rangle \in \mathscr{K}(\nu)$ with 
$\|\psi_1 \circ \varphi_1(g) - \psi_2 \circ \varphi_2(g)\| < \varepsilon$ for all $g \in G$.  
To see this, we may assume that $m' = m'' =: n_1$ and $\sigma' = \sigma'' =: \tau_1$, 
by Proposition~\ref{prop:_jep_for_matrix-valued_function_algebras}; 
and that both $\varphi_1$ and $\varphi_2$ are diagonalizable, 
by Proposition~\ref{prop:_diagonalizability_1}.  


Let $\zeta^i_1, \ldots, \zeta^i_l$ be the maps associated to a diagonal expression of $\varphi_i$.  
Take $\delta > 0$ so that $|s-t| < \delta$ implies $|g(s)-g(t)| < \varepsilon$ for any $g \in G$, 
and apply Proposition~\ref{prop:_cutting-up_lamma} to obtain a morphism 
$\rho$ from $\langle \mathcal{A}_{n_1}, \tau_1 \rangle$ into some $\langle \mathcal{A}_{n_2}, \tau_2 \rangle$ 
such that the images of the maps associated to a diagonal expression of 
$\rho \circ \tilde{\varphi}_i$ have diameters less than $\delta/3$ for each $i$.  
Then applying Proposition~\ref{prop:_jep_for_matrix-valued_function_algebras}, 
find a diagonalizable morphism $\Phi_i$ such that the inequaily
$\|\rho \circ \varphi_i(g) - \Phi_i(g)\| < \varepsilon$ holds for $g \in G$, and that
the maps $\xi^i_1, \ldots, \xi^i_k$ associated to a diagonal expression of $\Phi_i$ satisfies 
$\xi^i_1 \leq \cdots \leq \xi^i_k$.  
Recalling the proof of Proposition~\ref{prop:_diagonalizability_1}, 
one can easily check that the diameters of the images of $\xi^i_j$ is still less than $\delta/3$.  


We claim that the inequality $\|\xi^1_j - \xi^2_j\| < \delta$ holds for all $j$.  
Suppose on the contrary that $\xi^1_j(t) \geq \xi^2_j(t) + \delta$ at some point $t \in [0, 1]$, 
and set $c := \max \xi^2_j$, $d := \min \xi^1_{j+1}$.  
(If $j$ is equal to $k$, then set $d := 1$ instead.)  
Then it follows that 
\begin{itemize}
	\item the image of $\xi^2_l$ is included in $[0, c]$ if $1 \leq l \leq j$; and 
	\item if the image of $\xi^1_l$ intersects with $[0, d)$, then $l$ is less than or equal to $j$.  
\end{itemize}
Since $d$ is larger than $c$ by at least $\delta/3$, and since $\Phi_i$ is trace-preserving, we have 
\[
	j = \sum_{l=1}^j \tau_2\bigl((\xi^2_l)^{-1}[0, c]\bigr) 
		\leq n_2 \tau_0\bigl( [0, c] \bigr)
		< n_2 \tau_0\bigl( [0, d) \bigr)
		\leq \sum_{l=1}^j \tau_1\bigl((\xi^1_l)^{-1}[0, 1]\bigr) = j, 
\]
which is a contradiction.  
Therefore, $\|\xi^1_j - \xi^2_j\|$ must be smaller than $\delta$, as desired.  


Now, let $u_i$ be a unitary such that the equality
\[
	\Phi_i(f) = \mathrm{Ad}(u_i)\bigl( \mathrm{diag}(f \circ \xi^i_1, \ldots, f \circ \xi^i_k) \bigr)
\]
holds for all $f \in \mathcal{A}_{n_0}$, 
and put $\psi_1 := \rho$ and $\psi_2 := \mathrm{Ad}(u_1 u_2^*) \circ \rho$.  
Then for $g \in G$, we have
\begin{align*}
	\psi_2 \circ \varphi_2(g) &= \mathrm{Ad}(u_1 u_2^*) \circ \rho \circ \varphi_2(g) \\
		&\sim_\varepsilon \mathrm{Ad}(u_1 u_2^*) \circ \Phi_2(g) \\
		&= \mathrm{Ad}(u_1)\bigl( \mathrm{diag}(g \circ \xi^2_1, \ldots, g \circ \xi^2_k) \bigr) \\
		&\sim_\varepsilon \mathrm{Ad}(u_1)\bigl( \mathrm{diag}(g \circ \xi^1_1, \ldots, g \circ \xi^1_k) \bigr) \\
		&= \Phi_1(g) \\
		&\sim_\varepsilon \psi_1 \circ \varphi_1(g), 
\end{align*}
which completes the proof.  
\end{proof}


\begin{thm}\label{thm:_knu_is_a_fraisse_class}
The class $\mathscr{K}(\nu)$ is a Fra\"{i}ss\'{e} class.  
\end{thm}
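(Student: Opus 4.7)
The plan is to verify the four defining conditions of a Fra\"iss\'e class. Three are already in hand: JEP is Proposition~\ref{prop:_jep_for_matrix-valued_function_algebras}, NAP is Proposition~\ref{prop:_nap_for_matrix-valued_function_algebras}, and CCP is automatic for classes of unital C*-algebras with traces by Remark~\ref{rem:_on_ccp}. The real content of the theorem is therefore the weak Polish property: separability of $\mathscr{K}(\nu)_n$ in the pseudometric $d^{\mathscr{K}(\nu)}$ for each $n$.

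For WPP I would exhibit an explicit countable dense set consisting of triples $(\mathcal{A}_m, \lambda, \vec c)$, with $\lambda$ the Lebesgue measure, $m$ ranging over the countable set $\mathbb{N}(\nu)$, and $\vec c$ ranging over a countable $\|\cdot\|$-dense subset of the generating $n$-tuples of $\mathcal{A}_m$. Since every subset of the separable space $\mathcal{A}_m^n$ is itself separable, such a subset exists (and is empty only when no generating tuple exists, in which case there is nothing to approximate). Given an arbitrary $(\mathcal{A}_m, \tau, \vec a) \in \mathscr{K}(\nu)_n$, Lemma~\ref{lem:_measures_and_monotone_functions} supplies a continuous non-decreasing surjection $\beta \colon [0,1] \to [0,1]$ with $\beta_*(\lambda) = \tau$; taking $(\mathcal{A}_m, \lambda)$ as the common codomain with $\beta^*$ on the left and $\mathrm{id}$ on the right then gives $d^{\mathscr{K}(\nu)}\bigl((\mathcal{A}_m, \tau, \vec a), (\mathcal{A}_m, \lambda, \vec b)\bigr) \leq \max_i \|a_i \circ \beta - b_i\|$ for any candidate $\vec b$.

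The hard part will be that when $\tau$ has atoms, $\beta$ is only a surjection and not a homeomorphism, so $\vec a \circ \beta$ generates a proper subalgebra of $\mathcal{A}_m$ and cannot itself play the role of $\vec b$ in the dense set. I would resolve this by perturbing $\beta$ to the homeomorphism $\beta_\epsilon(t) := (1-\epsilon)\beta(t) + \epsilon t$, which is strictly increasing (sum of a non-decreasing and a strictly increasing function), continuous, and fixes the endpoints, with $\|\beta - \beta_\epsilon\|_\infty \leq \epsilon$. Since $\beta_\epsilon^*$ is then a $\ast$-automorphism of $\mathcal{A}_m$, the tuple $\vec a \circ \beta_\epsilon$ is a genuine generator and can be $\|\cdot\|$-approximated by some $\vec b$ in the dense set within $\varepsilon/2$. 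Uniform continuity of the $a_i$ on $[0,1]$ controls $\max_i \|a_i \circ \beta - a_i \circ \beta_\epsilon\|$ by a further $\varepsilon/2$ once $\epsilon$ is small, and the triangle inequality then delivers $d^{\mathscr{K}(\nu)}\bigl((\mathcal{A}_m, \tau, \vec a), (\mathcal{A}_m, \lambda, \vec b)\bigr) < \varepsilon$, completing the verification of WPP.
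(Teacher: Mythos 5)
Your proposal is correct and follows essentially the same route as the paper: the paper's proof is a one-line assembly citing Propositions~\ref{prop:_jep_for_matrix-valued_function_algebras} and \ref{prop:_nap_for_matrix-valued_function_algebras} for JEP and NAP, Remark~\ref{rem:_on_ccp} for CCP, and the remark that WPP ``can be easily verified from the proof of Proposition~\ref{prop:_embeddability_lemma}'', i.e.\ precisely your reduction to the Lebesgue measure via $\beta^*$ together with separability of $\mathcal{A}_m^n$. Your additional care about $\vec a\circ\beta$ failing to generate when $\tau$ has atoms, fixed by the homeomorphism $\beta_\epsilon$, is a legitimate detail the paper leaves implicit, and your treatment of it is sound.
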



\begin{proof}
We have already shown that $\mathscr{K}(\nu)$ satisfies JEP and NAP in 
Propositions~\ref{prop:_jep_for_matrix-valued_function_algebras} and 
\ref{prop:_nap_for_matrix-valued_function_algebras}.  
Also, it can be easily verified form the proof of 
Proposition~\ref{prop:_embeddability_lemma} that $\mathscr{K}(\nu)$ satisfies WPP.  
Since $\mathscr{K}(\nu)$ automatically satisfies CCP, as is noted in Remark~\ref{rem:_on_ccp}, 
it follows that $\mathscr{K}(\nu)$ is a Fra\"{i}ss\'{e} class.  
\end{proof}


We close this section by showing that the Fra\"{i}ss\'{e} limit of $\mathscr{K}(\nu)$ is the unique UHF algebra 
$\mathbb{M}_\nu$ corresponding to the supernatural number $\nu$.  
The following lemma will be needed for the proof.  


\begin{lem}\label{lem:_perturbation_lemma}
Let $\langle \mathcal{A}_n, \tau \rangle$ be a member of $\mathscr{K}(\nu)$.  
Then for any finite subset $F \subseteq \mathcal{A}_n$ and any $\varepsilon > 0$, 
there exist a morphism from $\langle \mathcal{A}_n, \tau \rangle$ into some 
$\langle \mathcal{A}_m, \sigma \rangle \in \mathscr{K}(\nu)$ and a finite dimensional 
C*-subalgebra $\mathcal{B} \subseteq \mathcal{A}_m$ such that 
the image $\varphi[F]$ is included in the $\varepsilon$-neighborhood of $\mathcal{B}$.  
\end{lem}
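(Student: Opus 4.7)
The plan is to combine Proposition~\ref{prop:_embeddability_lemma} and Proposition~\ref{prop:_cutting-up_lamma} to produce a morphism whose image consists of functions that are almost locally constant, and then to take $\mathcal{B}$ to be the subalgebra of constant $\mathbb{M}_m$-valued functions in $\mathcal{A}_m$. The key observation is that if a diagonalizable morphism has associated maps with very small images, then each generator $f \in F$, being uniformly continuous, gets sent to a function $t \mapsto \mathrm{diag}(f(\zeta_1(t)), \ldots, f(\zeta_k(t)))$ that is close to a constant matrix.

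The steps proceed as follows. First, by uniform continuity of the finitely many elements of $F$, pick $\delta > 0$ such that $|s - t| < \delta$ implies $\|f(s) - f(t)\| < \varepsilon$ for all $f \in F$. Next, since the given trace $\tau$ need not be diffuse, I apply Proposition~\ref{prop:_embeddability_lemma} to obtain a morphism $\varphi_0 \colon \langle \mathcal{A}_n, \tau \rangle \to \langle \mathcal{A}_n, \lambda \rangle$ of the form $\varphi_0(f) = f \circ \beta$ for some continuous non-decreasing surjection $\beta \colon [0,1] \to [0,1]$ (with $\lambda$ the Lebesgue measure). Since $\beta$ is uniformly continuous, choose $\eta > 0$ so that $|s-t| < \eta$ implies $|\beta(s) - \beta(t)| < \delta$. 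Then apply Proposition~\ref{prop:_cutting-up_lamma} to the diffuse faithful measures $\lambda, \lambda$ to produce a diagonalizable morphism $\varphi_1 \colon \langle \mathcal{A}_n, \lambda \rangle \to \langle \mathcal{A}_m, \lambda \rangle$ (for some $m \in \mathbb{N}(\nu)$) of the form $\varphi_1(g)(t) = \mathrm{diag}(g(\xi_1(t)), \ldots, g(\xi_k(t)))$, whose associated maps $\xi_j$ all have images of diameter less than $\eta$.

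Set $\varphi := \varphi_1 \circ \varphi_0$ and $\zeta_j := \beta \circ \xi_j$, so that
\[
    \varphi(f)(t) = \mathrm{diag}\bigl(f(\zeta_1(t)), \ldots, f(\zeta_k(t))\bigr),
\]
and each $\zeta_j([0,1])$ has diameter less than $\delta$. Take $\mathcal{B} \subseteq \mathcal{A}_m$ to be the (finite dimensional) C*-subalgebra of constant $\mathbb{M}_m$-valued functions. For each $f \in F$, pick any $t_j^* \in \zeta_j([0,1])$ and let $c_f := \mathrm{diag}(f(t_1^*), \ldots, f(t_k^*)) \in \mathbb{M}_m \cong \mathcal{B}$. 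Then for every $t \in [0,1]$,
\[
    \|\varphi(f)(t) - c_f\| = \max_j \|f(\zeta_j(t)) - f(t_j^*)\| < \varepsilon,
\]
so $\|\varphi(f) - c_f\|_\infty \leq \varepsilon$ and $\varphi[F]$ lies in the $\varepsilon$-neighborhood of $\mathcal{B}$.

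There is no real obstacle here; the lemma is essentially an assembly of the preceding two propositions, with the only care point being that $\tau$ may fail to be diffuse (handled by the preliminary embedding via Proposition~\ref{prop:_embeddability_lemma}) and that one must chain the modulus of continuity of $\beta$ with the given modulus for $F$ before invoking the cutting-up lemma.
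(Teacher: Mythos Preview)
Your proof is correct and follows essentially the same approach as the paper: reduce to a diffuse trace via Proposition~\ref{prop:_embeddability_lemma}, apply Proposition~\ref{prop:_cutting-up_lamma} to make the associated maps have small image, and take $\mathcal{B}$ to be the constant $\mathbb{M}_m$-valued functions. The only cosmetic difference is that the paper first reduces to the generating set $F = \{\mathrm{id}_{[0,1]}\otimes 1\}\cup\{1\otimes e_{i,j}\}$ (so that all but one element of $F$ already lie in $\mathcal{B}$), whereas you handle an arbitrary finite $F$ directly via uniform continuity; both routes are equally short.
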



\begin{proof}
We may assume $F = \{ \mathrm{id}_{[0, 1]} \otimes 1_{\mathbb{M}_n} \} \cup 
\{ 1_{C[0, 1]} \otimes e_{i, j} \mid i, j = 1, \ldots, n \}$ where $\{e_{i, j}\}$ is the standard matrix unit of 
$\mathbb{M}_n$, because this set generates $\mathcal{A}_n$.  
Also, we may assume that $\tau$ is diffuse by Proposition~\ref{prop:_embeddability_lemma}.  
Now, let $\varphi$ be as in Proposition~\ref{prop:_cutting-up_lamma}.  
Then 
\begin{align*}
	\varphi(\mathrm{id}_{[0, 1]} \otimes 1_{\mathbb{M}_n}) 
		&= \mathrm{diag}(\xi_1, \ldots, \xi_k) \\
		&\sim_\varepsilon 1_{C[0, 1]} \otimes \mathrm{diag}(\xi_1(0), \ldots, \xi_k(0)), 
\end{align*}
so $\varphi[F]$ is included in the $\varepsilon$-neighborhood of the
finite dimensional C*-subalgebra $1_{C[0, 1]} \otimes \mathbb{M}_m$, as desired.  
\end{proof}


\begin{thm}
The Fra\"{i}ss\'{e} limit of $\mathscr{K}(\nu)$ is $\langle \mathbb{M}_\nu, \mathrm{tr} \rangle$, 
where $\mathrm{tr}$ is the unique trace on $\mathbb{M}_\nu$.    
\end{thm}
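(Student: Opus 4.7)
The plan is to combine Theorem \ref{thm:_main_result_of_fraiise_theory} with Lemma \ref{lem:_perturbation_lemma} to show that the Fra\"{i}ss\'{e} limit $\mathcal{M}$ of $\mathscr{K}(\nu)$ is an AF algebra whose finite-dimensional approximants are full matrix algebras, and then to identify the resulting UHF algebra with $\mathbb{M}_\nu$ via its supernatural number. The three main steps are: (a) every finite subset of $\mathcal{M}$ is approximated arbitrarily well by a matrix subalgebra $\mathbb{M}_m \subseteq \mathcal{M}$ with $m \in \mathbb{N}(\nu)$; (b) $\mathcal{M}$ is consequently a UHF algebra $\mathbb{M}_{\nu'}$; and (c) $\nu' = \nu$.

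For (a), fix a finite $F \subseteq \mathcal{M}$ and $\varepsilon > 0$. Since $\mathcal{M}$ is a $\mathscr{K}(\nu)$-structure, I choose an embedding $\varphi \colon \langle \mathcal{A}_n, \tau \rangle \hookrightarrow \mathcal{M}$ and a finite $G \subseteq \mathcal{A}_n$ with $F$ contained in the $(\varepsilon/3)$-neighborhood of $\varphi[G]$. Lemma \ref{lem:_perturbation_lemma} then furnishes a morphism $\psi \colon \langle \mathcal{A}_n, \tau \rangle \to \langle \mathcal{A}_m, \sigma \rangle$ in $\mathscr{K}(\nu)$ together with the matrix subalgebra $\mathcal{B} := 1_{C[0,1]} \otimes \mathbb{M}_m \cong \mathbb{M}_m$ of $\mathcal{A}_m$ such that $\psi[G]$ lies in the $(\varepsilon/3)$-neighborhood of $\mathcal{B}$. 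By $\mathscr{K}(\nu)$-universality there is an embedding $\chi' \colon \langle \mathcal{A}_m, \sigma \rangle \hookrightarrow \mathcal{M}$, and approximate $\mathscr{K}(\nu)$-homogeneity applied to the two embeddings $\varphi$ and $\chi' \circ \psi$ of $\langle \mathcal{A}_n, \tau \rangle$ into $\mathcal{M}$ provides an automorphism $\alpha$ of $\mathcal{M}$ with $\| \alpha \circ \chi' \circ \psi(g) - \varphi(g) \| < \varepsilon/3$ for every $g \in G$. Setting $\chi := \alpha \circ \chi'$, the image $\chi[\mathcal{B}] \cong \mathbb{M}_m$ is a matrix subalgebra of $\mathcal{M}$ lying within $\varepsilon$ of $F$.

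For (b), the separability of $\mathcal{M}$ together with step (a) permits, via a standard diagonal construction using a stability lemma for matrix subalgebras of a unital C*-algebra (if $\mathbb{M}_m$ is approximately contained in another matrix subalgebra $\mathbb{M}_{m'}$, a unitary close to $1$ conjugates the former inside the latter), the construction of a nested chain $\mathbb{M}_{m_1} \subseteq \mathbb{M}_{m_2} \subseteq \cdots$ of matrix subalgebras of $\mathcal{M}$ with dense union, exhibiting $\mathcal{M}$ as the UHF algebra $\mathbb{M}_{\nu'}$ for $\nu' := \sup_k m_k$ in the divisibility poset. For (c), each $m_k$ lies in $\mathbb{N}(\nu)$, so $\nu' \mid \nu$. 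Conversely, for any $n \in \mathbb{N}(\nu)$, $\mathscr{K}(\nu)$-universality furnishes a unital embedding $\mathcal{A}_n \hookrightarrow \mathcal{M} \cong \mathbb{M}_{\nu'}$; restricting to the constant matrix-valued functions yields a unital embedding $\mathbb{M}_n \hookrightarrow \mathbb{M}_{\nu'}$, which forces $n \in \mathbb{N}(\nu')$ because a minimal projection of $\mathbb{M}_n$ has trace $1/n$ in $\mathbb{M}_{\nu'}$ while the trace range of $\mathbb{M}_{\nu'}$ is contained in $\{ k/m : 0 \leq k \leq m,\ m \in \mathbb{N}(\nu') \}$. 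Thus $\nu \mid \nu'$, hence $\nu = \nu'$ and $\mathcal{M} \cong \mathbb{M}_\nu$; since $\mathbb{M}_\nu$ admits only one trace, the full $L$-structure of $\mathcal{M}$ must coincide with $\langle \mathbb{M}_\nu, \mathrm{tr} \rangle$.

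The main obstacle will be the nesting in (b): step (a) only produces matrix subalgebras approximating prescribed finite sets, and assembling these into a genuine increasing chain with dense union requires the perturbation lemma for matrix subalgebras mentioned above. This is a routine stability statement, but it is the one technical ingredient beyond Lemma \ref{lem:_perturbation_lemma}, universality, and approximate homogeneity of $\mathcal{M}$ that the proof genuinely needs.
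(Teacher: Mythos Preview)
Your proof is correct and follows essentially the same route as the paper: step (a) is virtually identical to the paper's argument, combining the $\mathscr{K}(\nu)$-structure property, Lemma~\ref{lem:_perturbation_lemma}, $\mathscr{K}(\nu)$-universality, and approximate $\mathscr{K}(\nu)$-homogeneity to show that every finite subset of the limit is $\varepsilon$-close to a full matrix subalgebra $\mathbb{M}_m$ with $m\in\mathbb{N}(\nu)$. The only cosmetic difference is in the wrap-up: the paper cites \cite[Theorem~III.3.4]{davidson96:_cstar_algebras} for the passage from local finite-dimensional approximation to AF and then identifies the supernatural number via K-theory (using Theorem~\ref{thm:_main_result_of_fraiise_theory} and universality), whereas you unpack the Glimm-type perturbation argument by hand in (b) and pin down $\nu$ via trace values in (c)---both are standard and equivalent ways to finish.
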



\begin{proof}
Let $\langle \mathcal{A}, \theta \rangle$ be the Fra\"{i}ss\'{e} limit of $\mathscr{K}$.  
By $\mathscr{K}$-universality and Theorem~\ref{thm:_main_result_of_fraiise_theory}, 
it is clear that $\mathcal{A}$ and $\mathbb{M}_\nu$ have the same K-theory.  
Therefore, it suffices to show that $\mathcal{A}$ is an AF algebra.  
For this, let $F$ be a subset of $\mathcal{A}$.  
Then given $\varepsilon > 0$, we can find a morphism $\varphi$ of some 
$\langle \mathcal{A}_n, \tau \rangle \in \mathscr{K}(\nu)$ into $\langle \mathcal{A}, \theta \rangle$ 
and a finite subset $F' \subseteq \mathcal{A}_n$ such that $F$ is included in 
the $\varepsilon$-neighborhood of $\varphi[F']$.  
On the other hand, by Lemma~\ref{lem:_perturbation_lemma}, 
there is a morphism $\psi$ from $\langle \mathcal{A}_n, \tau \rangle$ into some 
$\langle \mathcal{A}_m, \sigma \rangle \in \mathscr{K}(\nu)$ 
such that $\psi[F']$ is included in the $\varepsilon$-neighborhood of 
a finite dimensional C*-subalgebra of $\mathcal{A}_m$.  
Since $\langle \mathcal{A}, \theta \rangle$ is $\mathscr{K}$-universal and 
approximately $\mathscr{K}$-homogeneous, 
there is a morphism $\iota \colon \langle \mathcal{A}_m, \sigma \rangle \to \langle \mathcal{A}, \theta \rangle$ 
such that $d(\varphi(f), \iota \circ \psi(f))$ is less than $\varepsilon$ for all $f \in F'$.  
It follows that $F$ is included in the $3\varepsilon$-neighborhood of a finite dimensional C*-subalgebra 
of $\mathcal{A}$, so by \cite[Theorem III.3.4]{davidson96:_cstar_algebras}, 
$\mathcal{A}$ is an AF algebra, which completes the proof.  
\end{proof}

\section{The Jiang-Su Algebra}


Let $p, q$ be natural numbers.  
We shall begin with the well-known observation that 
if $\{e_{ij}\}_{i, j = 1}$ and $\{f_{kl}\}_{k, l = 1}$ are 
the standard matrix units of $\mathbb{M}_p$ and $\mathbb{M}_q$ respectively, 
then $\{e_{ij} \otimes f_{kl}\}_{i, j, k, l}$ is a matrix unit of 
$\mathbb{M}_p \otimes \mathbb{M}_q$, so $\mathbb{M}_p \otimes \mathbb{M}_q$ is 
canonically identified with $\mathbb{M}_{pq}$.  
Now, the \emph{dimension drop algebra} $\mathcal{Z}_{p, q}$ is defined by 
\[
	\mathcal{Z}_{p, q} := \{ f \in \mathcal{A}_{pq} \mid 
		f(0) \in \mathbb{M}_p \otimes 1_{\mathbb{M}_q} 
		\ \& \ f(1) \in 1_{\mathbb{M}_p} \otimes \mathbb{M}_q \}, 
\]
where we took over the notation $\mathcal{A}_n = C([0, 1], \mathbb{M}_n)$ 
from Section~\ref{sec:_uhf_algebras}.  
It is said to be \emph{prime} if $p$ and $q$ are co-prime.  
We denote by $\mathscr{K}$ the class of all pairs $\langle \mathcal{Z}_{p, q}, \tau \rangle$, 
where $\mathcal{Z}_{p, q}$ is a prime dimension drop algebra and $\tau$ is a faithful trace on it.  


In \cite{jiang99:_simple_unital}, Jiang and Su constructed the Jiang-Su algebra 
as an inductive limit of prime dimension drop algebras, 
and proved that it is the unique monotracial simple C*-algebra among such inductive limits.  
Our goal here is to show that the Jiang-Su algebra together with its unique trace is 
the Fra\"{i}ss\'{e} limit of the class $\mathscr{K}$.  
The direction of the proof is the same as that of Section~\ref{sec:_uhf_algebras}, 
but we need some additional observations because of the pinching condition.  


\begin{nttn}\label{nttn:_fs_and_nst}
Let $\xi = (\xi_1, \ldots, \xi_k)$ be a tuple of functions from $[0, 1]$ to $[0, 1]$.  
For $s = 0, 1$, we set $F_s(\xi) = \{\xi_1(s), \ldots, \xi_k(s)\}$.  
Also, for $t \in F_s(\xi)$, we denote by $n_s^t(\xi)$ the number of $i$ with $\xi_i(s) = t$.  
If the family $\xi$ under consideration is apparent from context, 
then $F_s(\xi)$ and $n_s^t(\xi)$ are simply written as $F_s$ and $n_s^t$ respectively.  
\end{nttn}


\begin{lem}\label{lem:_arithmetic_condition}
Let $\varphi \colon \mathcal{Z}_{p, q} \to \mathcal{A}_{p'q'}$ be 
a $*$-homomorphism of the form 
\[
	\varphi(f) = \mathrm{diag}(f \circ \xi_1, \ldots, f \circ \xi_k), 
\]
where $\xi_1, \ldots, \xi_k$ are continuous functions from $[0, 1]$ into $[0, 1]$, 
and $n_s^t = n_s^t(\xi)$ be as in Notation~\ref{nttn:_fs_and_nst}.  
Then the following are equivalent.  
\begin{enumerate}[label=\textup{(\roman*)}]
	\item There exists a unitary $u \in \mathcal{A}_{p'q'}$ such that 
		the image of $\mathrm{Ad}(u) \circ \varphi$ is included in $\mathcal{A}_{p'q'}$.  
	\item The congruence equations 
		\begin{equation}\label{eq:_arithmetic_condition}
		\begin{aligned}
			 qn_0^0 \equiv pn_0^1 \equiv 0 \pmod {q'}, 
			 \qquad qn_1^0 \equiv pn_1^1 \equiv 0 \pmod {p'}, \\
			 n_0^t \equiv 0 \pmod {q'}, \qquad 
			 n_1^t \equiv 0 \pmod {p'} \qquad (t \neq 0, 1)
		\end{aligned}
		\end{equation}
		hold.  
\end{enumerate}
Moreover, if $\mathcal{Z}_{p, q}$ is prime, 
then there exists a unitary $v \in \mathcal{A}_{p'q'}$ with the following property: 
for any $\psi \colon \mathcal{Z}_{p, q} \to \mathcal{A}_{p'q'}$ of the form 
\[
	\psi(f) = \mathrm{diag}(f \circ \zeta_1, \ldots, f \circ \zeta_k), 
\]
where $\zeta_1 \leq \cdots \leq \zeta_k$ are continuous functions from $[0, 1]$ into $[0, 1]$, 
if the numbers $n_s^t(\zeta)$ satisfies Eq.~(\ref{eq:_arithmetic_condition}), 
then the image of $\mathrm{Ad}(v) \circ \psi$ is included in $\mathcal{Z}_{p, q}$.  
\end{lem}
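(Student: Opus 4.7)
The plan is to reduce everything to the endpoint behavior of $\varphi$ at $t = 0, 1$, where it becomes a multiplicity computation for finite-dimensional sub-C*-algebras of $\mathbb{M}_{p'q'}$ classified by isomorphism type and embedding multiplicities.

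For the equivalence (i) $\Leftrightarrow$ (ii), I would first analyze $\mathrm{Im}(\mathrm{ev}_0 \circ \varphi) \subseteq \mathbb{M}_{p'q'}$. Grouping the diagonal blocks by the values $\xi_i(0) \in F_0$, the image is abstractly isomorphic to $\mathbb{M}_p \oplus \mathbb{M}_q \oplus \bigoplus_{t \in F_0 \setminus \{0,1\}} \mathbb{M}_{pq}$: the $\mathbb{M}_p$ summand comes from $f(0) \in \mathbb{M}_p \otimes 1_{\mathbb{M}_q}$ and is embedded with multiplicity $qn_0^0$ (the factor $q$ from the $1_{\mathbb{M}_q}$); the $\mathbb{M}_q$ summand is embedded with multiplicity $pn_0^1$; and each intermediate $\mathbb{M}_{pq}$ summand with multiplicity $n_0^t$. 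Since $\mathbb{M}_{p'} \otimes 1_{\mathbb{M}_{q'}}$ is the $q'$-fold amplification of $\mathbb{M}_{p'}$ inside $\mathbb{M}_{p'q'}$, a sub-C*-algebra of $\mathbb{M}_{p'q'}$ is unitarily conjugate into $\mathbb{M}_{p'} \otimes 1_{\mathbb{M}_{q'}}$ if and only if every simple-summand multiplicity is divisible by $q'$; this yields the $s = 0$ row of (\ref{eq:_arithmetic_condition}), and symmetrically the $s = 1$ row. For (ii) $\Rightarrow$ (i), I would separately pick $u(0), u(1) \in \mathbb{M}_{p'q'}$ providing the required conjugations at the endpoints, and connect them by a continuous path via path-connectedness of the unitary group of a matrix algebra.

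For the moreover part, I would exploit sortedness $\zeta_1 \leq \cdots \leq \zeta_k$ together with primality $\gcd(p, q) = 1$. Sortedness forces the $0$- and $1$-valued positions of $\zeta$ at each endpoint to occupy the initial and terminal segments, with lengths $n_s^0$ and $n_s^1$, independently of other details of $\psi$. Primality makes the divisibility conditions compatible across admissible configurations, so that a single block decomposition of $\mathbb{M}_{p'q'}$ can accommodate every admissible $\psi$ uniformly. I would then construct $v(0)$ as a block-permutation-type unitary that reorganizes the diagonal direct-sum decomposition of $\mathbb{M}_{p'q'}$ into the tensor factorization $\mathbb{M}_{p'} \otimes \mathbb{M}_{q'}$ in such a way that the initial-segment $\mathbb{M}_p$-contribution, the terminal-segment $\mathbb{M}_q$-contribution, and any middle-segment $\mathbb{M}_{pq}$-contributions simultaneously land in $\mathbb{M}_{p'} \otimes 1_{\mathbb{M}_{q'}}$ for every admissible $(n_0^0, n_0^1, n_0^t)$. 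Construct $v(1)$ symmetrically inside $1_{\mathbb{M}_{p'}} \otimes \mathbb{M}_{q'}$ and connect it to $v(0)$ by a continuous path of unitaries to obtain $v$.

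The main obstacle is in the moreover: verifying that a single universal $v$ works for every admissible ordered $\psi$ simultaneously. The equivalence (i) $\Leftrightarrow$ (ii) is essentially a multiplicity bookkeeping argument, but the universality of $v$ requires a careful combinatorial construction that uses primality to align the divisibility conditions $q' \mid qn_0^0,\ q' \mid pn_0^1,\ q' \mid n_0^t$ (and their $s=1$ counterparts) across the full family of valid configurations $n_s^t$.
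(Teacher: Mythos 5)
Your treatment of the equivalence (i) $\Leftrightarrow$ (ii) is correct and is essentially the paper's argument in different clothing: the paper extracts the multiplicities $qn_0^0, pn_0^1, n_0^t$ as ranks of images of minimal projections and uses that every projection in $\mathbb{M}_{p'}\otimes 1_{\mathbb{M}_{q'}}$ has rank divisible by $q'$, while you phrase the same computation as the classification of unital multimatrix subalgebras of $\mathbb{M}_{p'q'}$ by multiplicities; the converse via endpoint permutation unitaries joined by a path is exactly what the paper does.

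The gap is in the ``moreover'' part, and it is precisely the step you flag as ``the main obstacle'' without resolving it. A single $v$ can only exist if the block decomposition it implements is the same for every admissible sorted $\psi$, and for that you must know that the offsets at which the grouping begins --- the residues $n_0^0 \bmod q'$ and $n_0^1 \bmod q'$ (and $n_1^0, n_1^1 \bmod p'$) --- do not depend on $\psi$. Saying that ``primality makes the divisibility conditions compatible across admissible configurations'' asserts this rather than proves it. The actual argument, which is the entire content of the paper's third paragraph, is arithmetic: since $n_0^t \equiv 0 \pmod{q'}$ for $t \neq 0,1$, one has $n_0^0 + n_0^1 \equiv k \pmod{q'}$ for every admissible configuration; combining this with $qn_0^0 \equiv 0$ and $pn_0^1 \equiv 0 \pmod{q'}$ gives, for any two admissible tuples $\xi$ and $\zeta$, both $p\bigl(n_0^0(\xi)-n_0^0(\zeta)\bigr) \equiv 0$ and $q\bigl(n_0^0(\xi)-n_0^0(\zeta)\bigr) \equiv 0 \pmod{q'}$, whence $n_0^0(\xi) \equiv n_0^0(\zeta) \pmod{q'}$ by B\'{e}zout because $p$ and $q$ are coprime. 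Without this, your ``single block decomposition'' is not even well defined. Once the residues are pinned down, you should still verify (as you only gesture at) that every admissible sorted configuration lands inside one fixed multimatrix subalgebra: writing $\alpha = n_0^0 \bmod q'$, sortedness plus $q' \mid n_0^t$ for $t \neq 0,1$ guarantees that each consecutive group of $q'$ blocks in positions $\alpha+1, \dots, k - \beta$ lies within a single constant run, so the algebra generated by all admissible images has summands of multiplicities $q\alpha$, $q'$, and $p\beta$, all divisible by $q'$, and one conjugation of this single algebra into $\mathbb{M}_{p'}\otimes 1_{\mathbb{M}_{q'}}$ serves all $\psi$ at once.
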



\begin{proof}
First, we shall prove (i) $\Rightarrow$ (ii).  
Let $F_s = F_s(\xi)$ be as in Notation~\ref{nttn:_fs_and_nst}.  
For $t \in F_0$, take $f^t \in \mathcal{Z}_{p, q}$ such that 
$f^t(t)$ is a minimal projection in $\mathrm{ev}_t[\mathcal{Z}_{p, q}]$ and 
$f^t(s)$ vanishes if $s \in F_0$.  
If $t \neq 0, 1$, then $\mathrm{ev}_0 \circ \varphi(f^t)$ is a projection of rank $n_0^t$ 
in $\mathbb{M}_{p'} \otimes 1_{\mathbb{M}_{q'}}$.  
Since the rank of any projection in $\mathbb{M}_{p'} \otimes 1_{\mathbb{M}_{q'}}$ is 
necessarily a multiple of $q'$, it follows that $n_0^t \equiv 0 \pmod {q'}$.  
On the other hand, $f^0(0)$ and $f^1(1)$ are minimal projections in 
$\mathbb{M}_p \otimes 1_{\mathbb{M}_q}$ and $1_{\mathbb{M}_p} \otimes \mathbb{M}_q$, 
so that their ranks are $q$ and $p$ respectively.  
Therefore, $\mathrm{ev}_0 \circ \varphi(f^0)$ and $\mathrm{ev}_0 \circ \varphi(f^1)$ are 
projections of ranks $qn_0^0$ and $pn_0^1$, 
which implies $qn_0^0 \equiv pn_0^1 \equiv 0 \pmod {q'}$.  
The other congruence equations in Eq.~(\ref{eq:_arithmetic_condition}) follow by similar arguments.  


Next, in order to see (ii) $\Rightarrow$ (i), suppose Eq.~(\ref{eq:_arithmetic_condition}) holds.  
If $\xi_i(0) = t$, then by definition of $n_s^t$, 
there are distinct suffixes $i = i_1, \ldots, i_{n_0^t}$ 
such that $\xi_{i_1}(0) = \cdots = \xi_{i_{n_0^t}}(0) = t$, 
so that the matrices $f \circ \xi_{i_1}(0), \ldots, f \circ \xi_{i_{n_0^t}}$ are 
equal for each $f \in \mathcal{Z}_{p, q}$.  
On one hand, for $t \neq 0, 1$, the number $n_0^t$ is a multiple of $q'$ by assumption; 
on the other hand, if $t = 0$ or $t = 1$, then 
$f \circ \xi_i(0)$ is included in $\mathbb{M}_p \otimes 1_{\mathbb{M}_q}$ or 
$1_{\mathbb{M}_p} \otimes \mathbb{M}_q$ respectively, 
so the congruence equation $qn_0^0 \equiv pn_0^1 \equiv 0 \pmod {q'}$ implies the existence of 
a permutation unitary $w$ such that  
$\mathrm{diag}(f \circ \xi_{i_1}(0), \ldots, f \circ \xi_{i_{n_0^t}})$ is 
equal to $\mathrm{Ad}(w)(a_f \otimes 1_{\mathbb{M}_{q'}})$ for some matrix $a_f$.  
Consequently, there is a permutation unitary $u_0 \in \mathbb{M}_{p'q'}$ 
such that the image of $\mathrm{Ad}(u_0) \circ \mathrm{ev}_0 \circ \varphi$ is included in 
$\mathbb{M}_{p'} \otimes 1_{\mathbb{M}_{q'}}$.  
Similarly, we can find a unitary $u_1 \in \mathbb{M}_{p'q'}$ such that 
the image of $\mathrm{Ad}(u_1) \circ \mathrm{ev}_1 \circ \varphi$ is included in 
$1_{\mathbb{M}_{q'}} \otimes \mathbb{M}_{p'}$.  
Since the unitary group of $\mathbb{M}_{p'q'}$ is path-connected, 
there is a unitary $u \in \mathcal{A}_{p'q'}$ with $u(0) = u_0$ and $u(1) = u_1$, 
so that the image of $\mathrm{Ad}(u) \circ \varphi$ is included in $\mathcal{Z}_{p', q'}$, 
as desired.  


Finally, suppose that $\mathcal{Z}_{p, q}$ is prime.  
Recalling the construction of the unitary $u$ in the preceding paragraph, 
and taking the assumption $\zeta_1 \leq \cdots \leq \zeta_k$ into account, 
we see that the existence of the unitary $v$ in the latter claim follows from 
the congruence equations 
\[
\begin{aligned}
	n_0^0(\xi) \equiv n_0^0(\zeta) \pmod {q'}, 
	\qquad n_0^1(\xi) \equiv n_0^1(\zeta) \pmod {q'}, \\
	n_1^0(\xi) \equiv n_1^0(\zeta) \pmod {p'}, 
	\qquad n_1^1(\xi) \equiv n_1^1(\zeta) \pmod {p'}.  
\end{aligned}
\]
By what we proved in the preceding paragraphs, we have 
\[
	qn_0^0(\xi) \equiv pn_0^1(\xi) \equiv 0 \pmod {q'}, 
	\qquad qn_0^0(\zeta) \equiv pn_0^1(\zeta) \equiv 0 \pmod {q'},   
\]
and 
\[
	n_0^0(\xi) + n_0^1(\xi) \equiv k \equiv n_0^0(\zeta) + n_0^0(\zeta) \pmod {q'}, 
\]
since $n_0^t \equiv 0 \pmod {q'}$ for $t \neq 0, 1$.  
Consequently, it follows that 
\[
\begin{aligned}
	p(n_0^0(\xi) - n_0^0(\zeta)) \equiv p(n_0^1(\xi) - n_0^1(\zeta)) \equiv 0 \pmod {q'}, \\
	q(n_0^1(\xi) - n_0^1(\zeta)) \equiv q(n_0^0(\xi) - n_0^0(\zeta)) \equiv 0 \pmod {q'}, 
\end{aligned}
\]
and so 
\[
	n_0^0(\xi) \equiv n_0^0(\zeta) \pmod {q'}, 
	\qquad n_0^1(\xi) \equiv n_0^1(\zeta) \pmod {q'}, 
\]
since $p$ and $q$ are co-prime.  The other equivalences follow similarly, 
which completes the proof.  
\end{proof}


We note that every trace on a dimension drop algebra bijectively corresponds to  
a probability Radon measure on $[0, 1]$, as in the case of $\mathcal{A}_n$.  
The following proposition is an immediate corollary of 
Lemma~\ref{lem:_measures_and_monotone_functions}.  
The proof is the same as Proposition~\ref{prop:_embeddability_lemma}, so we omit it.  

\begin{prop}\label{prop:_embeddability_lemma_2}
Let $\tau$ be a faithful trace on $\mathcal{Z}_{p, q}$.  
Then for any  faithful diffuse trace $\sigma$ on $\mathcal{Z}_{p, q}$, 
there is a morphism $\varphi \colon \langle \mathcal{Z}_{p, q}, \tau \rangle 
\to \langle \mathcal{Z}_{p, q}, \sigma \rangle$.   
\end{prop}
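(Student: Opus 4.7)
The plan is to imitate the proof of Proposition~\ref{prop:_embeddability_lemma} essentially verbatim, adding only a verification at the endpoints.  As noted just before the statement, traces on $\mathcal{Z}_{p, q}$ correspond to probability Radon measures on $[0, 1]$ exactly as in the case of $\mathcal{A}_n$, so Lemma~\ref{lem:_measures_and_monotone_functions}, applied to the pair $(\sigma, \tau)$ with $\sigma$ playing the role of the diffuse measure, produces a non-decreasing continuous surjection $\beta \colon [0, 1] \to [0, 1]$ with $\beta_*(\sigma) = \tau$.  I then set $\varphi(f) := f \circ \beta$, which is plainly a unital $*$-homomorphism of $\mathcal{A}_{pq}$ into itself; it is injective because $\beta$ is surjective, and it is trace-preserving by the change-of-variables identity
\[
	\sigma(\varphi(f)) = \int \mathrm{tr}(f \circ \beta)\, d\sigma = \int \mathrm{tr}(f)\, d\beta_*(\sigma) = \tau(f).
\]

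The one point not addressed by the proof of Proposition~\ref{prop:_embeddability_lemma} is that $\varphi$ has to carry $\mathcal{Z}_{p, q}$ into $\mathcal{Z}_{p, q}$, rather than only into $\mathcal{A}_{pq}$.  I expect this to be automatic from the observation that a continuous, non-decreasing surjection $\beta \colon [0, 1] \to [0, 1]$ is forced to satisfy $\beta(0) = 0$ and $\beta(1) = 1$ (as its minimum and maximum on $[0,1]$), so that for any $f \in \mathcal{Z}_{p, q}$ the values $\varphi(f)(0) = f(0)$ and $\varphi(f)(1) = f(1)$ lie in $\mathbb{M}_p \otimes 1_{\mathbb{M}_q}$ and $1_{\mathbb{M}_p} \otimes \mathbb{M}_q$ respectively, which is precisely the pinching condition defining $\mathcal{Z}_{p, q}$.

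I do not foresee any substantive obstacle: the delicate arithmetic constraints of Lemma~\ref{lem:_arithmetic_condition} play no role here, since $\varphi$ is built purely by scalar reparametrisation of the interval and never mixes matrix fibres of different dimensions.  This is exactly why, as the author observes, the argument reduces to that of Proposition~\ref{prop:_embeddability_lemma}.
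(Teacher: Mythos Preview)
Your proof is correct and is precisely the argument the paper has in mind: the paper omits the proof entirely, saying only that it is the same as Proposition~\ref{prop:_embeddability_lemma}, and you have reproduced that argument together with the one extra check (that $\beta(0)=0$ and $\beta(1)=1$, so the pinching condition is preserved) needed in the dimension-drop setting.
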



\begin{prop}\label{prop:_extension_lemma}
Let $p$ and $q$ be co-prime natural numbers.  
Then there exists $M(p, q) \in \mathbb{N}$ such that 
if $p'$ and $q'$ are co-prime natural numbers larger than $M(p, q)$ and if $pq$ divides $p'q'$, 
then for any faithful diffuse measures $\tau, \sigma$ on $[0, 1]$, 
we can find a morphism $\varphi$ from $\langle \mathcal{Z}_{p, q}, \tau \rangle$ into 
$\langle \mathcal{Z}_{p', q'}, \sigma \rangle$.  
\end{prop}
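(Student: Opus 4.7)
The strategy is to construct a diagonal unital $*$-homomorphism
\[
	\tilde{\varphi}(f) = \mathrm{diag}(f \circ \xi_1, \ldots, f \circ \xi_k),
	\qquad k := p'q'/(pq),
\]
from $\mathcal{Z}_{p, q}$ into $\mathcal{A}_{p'q'}$ that is trace-preserving from $\tau$ to $\sigma$, has pointwise-ordered components $\xi_1 \leq \cdots \leq \xi_k$, and fulfils the arithmetic condition~(\ref{eq:_arithmetic_condition}) of Lemma~\ref{lem:_arithmetic_condition}. The ``moreover'' clause of that lemma will then produce a unitary $v \in \mathcal{A}_{p'q'}$ such that $\mathrm{Ad}(v) \circ \tilde{\varphi}$ has image inside $\mathcal{Z}_{p', q'}$, and this composition will be the desired morphism.

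The first step is to pin down the combinatorial data at the endpoints. Non-negative integers $\alpha_s, \beta_s$ ($s = 0, 1$) will play the roles of $n_s^0(\xi)$ and $n_s^1(\xi)$, while the remaining indices at each endpoint will be collected into bundles of common size $q'$ at $s = 0$ and $p'$ at $s = 1$, each bundle being sent to a single interior point. Under this scheme, Eq.~(\ref{eq:_arithmetic_condition}) reduces to $q \alpha_0 \equiv p \beta_0 \equiv 0 \pmod{q'}$ and $\alpha_0 + \beta_0 \equiv k \pmod{q'}$, together with the analogous relations at $s = 1$ with $q'$ replaced by $p'$. Because $\gcd(p, q) = 1$ implies $\gcd(pq, q') = \gcd(p, q')\gcd(q, q')$, and because $kpq = p'q'$ forces $k \equiv 0 \pmod{q'/\gcd(pq, q')}$, a Bezout calculation produces solutions with $\alpha_0, \beta_0 < q'$ and $\alpha_1, \beta_1 < p'$. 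Setting $M(p, q) := 2pq$ then guarantees $k > 2q'$ and $k > 2p'$, which ensures $\alpha_0 + \beta_0 \leq k$ and $\alpha_1 + \beta_1 \leq k$.

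Second, I realize the $\xi_i$'s as continuous functions matching the prescribed endpoint values, pointwise ordered as $\xi_1 \leq \cdots \leq \xi_k$, and jointly achieving $\sum_i (\xi_i)_* \sigma = k \tau$, which is precisely the trace-preservation condition for $\tilde{\varphi}$. To do so, I partition $[0, 1]$ into $k$ pieces of $\tau$-mass $1/k$, assign one to each slot $\xi_i$ in a way compatible with the pinching data and the ordering, and then invoke Lemma~\ref{lem:_measures_and_monotone_functions} on each slot to make $(\xi_i)_* \sigma$ equal to the corresponding piece of $k \tau$. The main obstacle lies in this last stage: the pinching data forces many of the $\xi_i$ to share values at the endpoints, the pointwise ordering must be maintained throughout $[0, 1]$, and trace preservation must hold simultaneously. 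These three constraints will be compatible thanks to the arithmetic choice in the previous paragraph, which distributes exactly $1/k$ of the $\tau$-mass per slot and arranges the pinched indices in a pattern consistent with the non-decreasing order.
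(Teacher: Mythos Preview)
Your second paragraph is sound: the Bezout computation producing $\alpha_s,\beta_s$ with $\alpha_0,\beta_0<q'$ and $\alpha_1,\beta_1<p'$ is correct, and the choice $M(p,q)=2pq$ does yield $k>2q'$ and $k>2p'$.  You also do not need the ``moreover'' clause of Lemma~\ref{lem:_arithmetic_condition}: the implication (ii)$\Rightarrow$(i) already hands you a unitary $u$ with $\mathrm{Ad}(u)\circ\tilde\varphi$ landing in $\mathcal{Z}_{p',q'}$, so the ordering $\xi_1\leq\cdots\leq\xi_k$ is an unnecessary burden.

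The real gap is paragraph~3.  Your proposed mechanism---partition $[0,1]$ into $k$ intervals of $\tau$-mass $1/k$, assign one to each slot, and apply Lemma~\ref{lem:_measures_and_monotone_functions} slotwise---produces $\xi_i$'s whose images are consecutive subintervals $[a_{i-1},a_i]$.  But then only $\xi_1$ can ever take the value~$0$ and only $\xi_k$ the value~$1$, whereas your pinching data asks for $\alpha_0$ many indices with $\xi_i(0)=0$ and $\beta_0$ many with $\xi_i(0)=1$ (and typically $\alpha_0,\beta_0>1$).  Having bundles of indices share a common interior value at each endpoint makes this worse, not better: the $q'$-bundling at $s=0$ and the $p'$-bundling at $s=1$ partition the index set in two incompatible ways, and you give no indication of how a single family of continuous maps can satisfy both while also realising the pushforward condition $\sum_i(\xi_i)_*\sigma=k\tau$.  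The closing sentence (``these constraints will be compatible thanks to the arithmetic choice'') is exactly the step that needs an argument, and none is supplied.

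For comparison, the paper avoids this difficulty by choosing the endpoint data so that \emph{no} interior values occur at either endpoint; i.e.\ every $\xi_i(0),\xi_i(1)\in\{0,1\}$.  This costs a subtler arithmetic step (a Sylvester--Frobenius-type bound $N(c,d)$ rather than a bare Bezout), but the payoff is that only three template maps $\eta_1,\eta_2,\eta_3$ are needed, each repeated many times, with the $\tau$-measures of three subintervals tuned so that the weighted sum of their pushforwards reproduces~$\tau$.  The paper does not attempt the pointwise ordering.
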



\begin{proof}
Let $c, d$ be divisors of $p, q$ respectively.  
Then, since $c$ and $d$ are co-prime, there exists $N(c, d) \in \mathbb{N}$ such that 
for any $n \in \mathbb{N}$, one can find $l, m \in \mathbb{N}$ with 
$(lc + md) < N(c, d)$ and $lc + md \equiv n \pmod {cd}$.  
We set 
\[
	M(p, q) := \max_{c|p, d|q} \frac{pq}{cd}N(c, d).  
\]


Now suppose that $p', q'$ are co-prime natural numbers larger than $M(p, q)$ and 
that $pq$ divides $p'q'$.  
Set $r := p'/(g_{p, p'}g_{q, p'})$ and $s := q'/(g_{p, q'}g_{q, q'})$, 
where $g_{n, m}$ denotes the greatest common divisor of $n$ and $m$.  
Note that since $p'$ and $q'$ are co-prime and $pq$ divides $p'q'$, 
the equations $p = g_{p, p'}g_{p, q'}$ and $q = g_{q, p'}g_{q, q'}$ hold.  
Since $r > M(p, q)/(g_{p, p'}g_{q, p'}) \geq N(g_{p, q'}, g_{q, q'})$ and 
similarly $s > N(g_{p, p'}, g_{q, q'})$, 
we can find $l_r, m_r, l_s, m_s \in \mathbb{N}$ 
such that both $r - l_r g_{p, q'} - m_r g_{q, q'}$ and $s - l_s g_{p, p'} - m_s g{q, p'}$ 
are positive and can be divided by $g_{p, q'}g_{q, q'}$ and $g_{p, p'}g_{q, p'}$ respectively.  
We shall put 
\[
\begin{aligned}
	a_0 := l_r g_{p, q'} s, \qquad b_0 := rs - a_0, \\
	a_1 := l_s g_{p, p'} r, \qquad b_1 := rs - a_1.  
\end{aligned}
\]

Suppose $a_0 > a_1$ and set$c := a_0-a_1$.  
We cut $[0, 1]$ into three intervals $I_1 = [0, t_1]$, $I_2 = [t_1, t_2]$ and $I_3 = [t_2, 1]$ 
so that 
\[
	\tau(I_1) = \frac{a_1 + 1/3}{rs}, \qquad 
	\tau(I_2) = \frac{c - 2/3}{rs}, \qquad 
	\tau(I_3) = \frac{b_0 + 1/3}{rs}.  
\]
Let $\tau_i$ be the normalization of $\tau|_{I_i}$.  
An argument similar to the proof of Lemma~\ref{lem:_measures_and_monotone_functions}
enable us to find continuous functions $\eta_1, \eta_2, \eta_3$ such that 
\begin{itemize}
	\item $\eta_1$ is a surjection from $[0, 1]$ onto $I_1$ 
		with $\eta_1(0) = \eta_1(1) = 0$ and $(\eta_1)_*(\sigma) = \tau_1$; 
	\item $\eta_2$ is the increasing surjection from $[0, 1]$ onto $[0, 1]$ 
		with $(\eta_2)_*(\sigma) = \frac{1}{3c}(\tau_1 + \tau_3) + (1 - \frac{2}{3c})\tau_2$; 
		and 
	\item $\eta_3$ is a surjection from $[0, 1]$ onto $I_3$ 
		with $\eta_3(0) = \eta_3(1) = 1$ and $(\eta_3)_*(\sigma) = \tau_3$.  
\end{itemize}
Then put
\[
	\xi_i := \left\{
		\begin{array}{ll}
			\eta_1 & \text{if } i = 1, \ldots, a_1, \\
			\eta_2 & \text{if } i = a_1 + 1, \ldots, a_0, \\
			\eta_3 & \text{if } i = a_0 + 1, \ldots, rs, 
		\end{array}
		\right.
\]
and consider the $*$-homomorphism $\varphi \colon \mathcal{Z}_{p, q} \to \mathcal{A}_{p'q'}$ 
defined by 
\[
	\varphi(f) = \mathrm{diag}(f \circ \xi_1, \ldots, f \circ \xi_{rs}).  
\]
It is not difficult to see from the definition of $\eta_i$ that $\varphi$ is trace-preserving.  
We shall check that this $\varphi$ satisfies the assumption of 
Lemma~\ref{lem:_arithmetic_condition}.  
Indeed, the functions $\eta_1, \eta_2, \eta_3$ are defined so that the equations
\[
	n_0^0 = a_0, \qquad n_0^1 = b_0, \qquad n_1^0 = a_1, \qquad n_1^1 = b_1
\]
holds, where $n_s^t = n_s^t(\xi)$ is as in Notation~\ref{nttn:_fs_and_nst}.  
Now, it follows that
\[
	q n_0^0 = q a_0 = q l_r g_{p, q'} s = g_{q, p'} l_r q' \equiv 0 \pmod {q'}, 
\]
and 
\[
	p n_0^1 = p (rs - a_0) = p (r - l_r g_{p, q'} - m_r g_{q, q'}) s + p m_r g_{q, q'} s 
	\equiv 0 \pmod {q'}.  
\]
The other congruences in Eq.~(\ref{eq:_arithmetic_condition}) can be similarly verified, 
so there exist a unitary $u \in \mathcal{A}_{p'q'}$ such that 
$\mathrm{Ad}(u) \circ \varphi$ is a morphism from $\langle \mathcal{Z}_{p, q}, \tau \rangle$ 
into $\langle \mathcal{Z}_{p', q'}, \sigma \rangle$, which completes the proof.  
\end{proof}


\begin{cor}
The class $\mathscr{K}$ satisfies JEP.  
\end{cor}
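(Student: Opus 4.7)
The plan is to derive the corollary essentially for free from Propositions~\ref{prop:_embeddability_lemma_2} and~\ref{prop:_extension_lemma}, which together package all of the work. Given $\langle \mathcal{Z}_{p_i, q_i}, \tau_i\rangle \in \mathscr{K}$ for $i = 1, 2$, I need to produce a single $\langle \mathcal{Z}_{p', q'}, \sigma\rangle \in \mathscr{K}$ admitting morphisms from both.

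First, since Proposition~\ref{prop:_extension_lemma} requires its source trace to be faithful and diffuse, I would begin by applying Proposition~\ref{prop:_embeddability_lemma_2} to obtain, for each $i$, a morphism $\langle \mathcal{Z}_{p_i, q_i}, \tau_i\rangle \to \langle \mathcal{Z}_{p_i, q_i}, \tilde\tau_i\rangle$ in which $\tilde\tau_i$ is a fixed faithful diffuse trace (for instance the one corresponding to the Lebesgue measure). After this reduction I may assume both given traces are themselves diffuse.

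The second step is to manufacture co-prime natural numbers $p'$, $q'$ that are simultaneously large enough and divisible in the right way. Set $M := \max\{M(p_1, q_1),\, M(p_2, q_2)\}$ and $N := \mathrm{lcm}(p_1 q_1,\, p_2 q_2)$. I would then pick two distinct primes $r \neq s$, each greater than $\max(M, N)$ and each coprime to $N$; this is possible because only finitely many primes divide $N$. Put $p' := Nr$ and $q' := s$. Then $\gcd(p', q') = 1$, both $p'$ and $q'$ exceed $M$, and $p_i q_i \mid N \mid p'$ for each $i$, so in particular $p_i q_i \mid p'q'$. Now fix any faithful diffuse measure $\sigma$ on $[0, 1]$ (viewed as a trace on $\mathcal{Z}_{p', q'}$), and invoke Proposition~\ref{prop:_extension_lemma} twice to obtain morphisms $\langle \mathcal{Z}_{p_i, q_i}, \tilde\tau_i\rangle \to \langle \mathcal{Z}_{p', q'}, \sigma\rangle$. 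Composing with the morphisms from the first step yields the desired joint embedding.

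There is essentially no obstacle here once Proposition~\ref{prop:_extension_lemma} is available; the only piece of genuine content is the elementary arithmetic of the second step, and even that is routine, since any pair of sufficiently large primes avoiding the prime divisors of $N$ will serve.
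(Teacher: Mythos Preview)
Your proposal is correct and follows essentially the same route as the paper: reduce to diffuse traces via Proposition~\ref{prop:_embeddability_lemma_2}, choose co-prime $p', q'$ large enough and with $p'q'$ divisible by $\mathrm{lcm}(p_1q_1, p_2q_2)$, and then apply Proposition~\ref{prop:_extension_lemma}. The only difference is that you spell out an explicit construction of $p', q'$, whereas the paper simply asserts that such numbers exist.
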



\begin{proof}
Let $\langle \mathcal{Z}_{p_1, q_1}, \tau_1 \rangle$, 
$\langle \mathcal{Z}_{p_2, q_2}, \tau_2 \rangle$ be members of $\mathscr{K}$.  
Find co-prime $p_3, q_3 \in \mathbb{N}$ such that both $p_3$ and $q_3$ are larger than 
$\max \{M(p_1, q_1), M(p_2, q_2)\}$ and $p_3 q_3$ is divided by $\mathrm{lcm}(p_1 q_1, p_2 q_2)$.  
Then by Propositions~\ref{prop:_embeddability_lemma_2} and 
\ref{prop:_extension_lemma}, there is a morphism $\varphi_i$ from 
$\langle \mathcal{Z}_{p_i, q_i}, \tau_i \rangle$ into 
$\langle \mathcal{Z}_{p_3, q_3}, \tau_3 \rangle$, where $\tau_3$ is a diffuse faithful trace.  
\end{proof}


\begin{prop}\label{prop:_diagonalizability_2}
Let $\varphi \colon \langle \mathcal{Z}_{p, q}, \tau \rangle \to 
\langle \mathcal{Z}_{p', q'}, \sigma \rangle$ be a morphism.  
Then for any finite subset $G \subseteq \mathcal{Z}_{p, q}$ and any $\varepsilon > 0$, 
there exists a diagonalizable morphism $\psi \colon \langle \mathcal{Z}_{p, q}, \tau \rangle 
\to \langle \mathcal{Z}_{p', q'}, \sigma \rangle$ with $\|\varphi(g) - \psi(g)\| < \varepsilon$ 
for all $g \in G$.  
Moreover, we can take $\psi$ so that 
the maps $\xi_1, \ldots, \xi_k$ associated to a diagonal expression of $\psi$ 
satisfies $\xi_1 \leq \cdots \leq \xi_k$.  
\end{prop}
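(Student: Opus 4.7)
The plan is to mirror the proof of Proposition~\ref{prop:_diagonalizability_1}, with a single adaptation at the endpoints $t = 0$ and $t = 1$ to ensure that the image of the resulting diagonalizable map lands in $\mathcal{Z}_{p', q'}$ rather than merely in $\mathcal{A}_{p'q'}$.

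Since $\mathrm{id}_{[0,1]} \otimes 1_{\mathbb{M}_{pq}}$ belongs to $\mathcal{Z}_{p, q}$, the pointwise spectral analysis of $\mathrm{ev}_t \circ \varphi(\mathrm{id}_{[0,1]} \otimes 1_{\mathbb{M}_{pq}})$ from Proposition~\ref{prop:_diagonalizability_1} carries over unchanged. This produces a multiset $\{\{s_1^t, \ldots, s_k^t\}\} \subseteq [0,1]$ for each $t$, continuous sorted functions $\xi_1 \leq \cdots \leq \xi_k \colon [0,1] \to [0,1]$, and, for each $t$, a unitary $v_t \in \mathbb{M}_{p'q'}$ satisfying
\[
	\mathrm{ev}_t \circ \varphi(f) = \mathrm{Ad}(v_t)\bigl(\mathrm{diag}(f(s_1^t), \ldots, f(s_k^t))\bigr), \qquad f \in \mathcal{Z}_{p,q}.
\]

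I then assemble the unitary $u \in \mathcal{A}_{p'q'}$ exactly as in Proposition~\ref{prop:_diagonalizability_1} from a finite open covering $\{I_1, \ldots, I_r\}$ of $[0,1]$, but now insist that the left endpoint of $I_1$ equals $0$ and the right endpoint of $I_r$ equals $1$; setting $c_1 := 0$ and $c_r := 1$, I require the image of $u$ on the right-neighborhood of $0$ and on the left-neighborhood of $1$ to lie inside the sets
\[
	\Bigl\{ w \in \mathbb{M}_{p'q'} \Bigm| \mathrm{ev}_c \circ \varphi(f) = \mathrm{Ad}(w)\bigl(\mathrm{diag}(f \circ \xi_1(c), \ldots, f \circ \xi_k(c))\bigr) \text{ for all } f \in \mathcal{Z}_{p,q} \Bigr\}
\]
for $c = 0$ and $c = 1$ respectively. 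Because $\varphi$ is a morphism into $\mathcal{Z}_{p',q'}$, its image under $\mathrm{ev}_0$ lies in $\mathbb{M}_{p'} \otimes 1_{\mathbb{M}_{q'}}$ and its image under $\mathrm{ev}_1$ lies in $1_{\mathbb{M}_{p'}} \otimes \mathbb{M}_{q'}$, so setting $\psi(f) := \mathrm{Ad}(u)\bigl(\mathrm{diag}(f \circ \xi_1, \ldots, f \circ \xi_k)\bigr)$ automatically forces $\psi(f)(0) \in \mathbb{M}_{p'} \otimes 1_{\mathbb{M}_{q'}}$ and $\psi(f)(1) \in 1_{\mathbb{M}_{p'}} \otimes \mathbb{M}_{q'}$; hence $\psi$ takes values in $\mathcal{Z}_{p',q'}$, as required.

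The approximation estimate $\|\varphi(g) - \psi(g)\| < 4\varepsilon$ for $g \in G$ follows by the same three-step triangle argument as in Proposition~\ref{prop:_diagonalizability_1}. The main point that must be checked is that each of the endpoint constraint sets displayed above is non-empty and path-connected: non-emptiness is witnessed by $v_c$ post-composed with a permutation unitary implementing the sorting of the spectrum at $c$, while path-connectedness holds since the set is a coset of the unitary group of the commutant of a finite-dimensional subalgebra of $\mathbb{M}_{p'q'}$. Apart from this endpoint bookkeeping, no ingredient beyond the matrix-unit lifting argument from \cite[Lemma III.3.2]{davidson96:_cstar_algebras} already invoked in Proposition~\ref{prop:_diagonalizability_1} is required, so the pinching condition is the only genuinely new obstacle and it is handled by this simple modification of the cover.
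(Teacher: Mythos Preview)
Your proposal is correct and follows essentially the same approach as the paper: the paper's proof also invokes Proposition~\ref{prop:_diagonalizability_1} verbatim and then remarks that a ``trivial modification of the third paragraph'' lets one choose $u$ so that $\mathrm{ev}_0 \circ \psi = \mathrm{ev}_0 \circ \varphi$ and $\mathrm{ev}_1 \circ \psi = \mathrm{ev}_1 \circ \varphi$, which is exactly your device of forcing $c_1 = 0$ and $c_r = 1$. You have simply made explicit what the paper leaves as an exercise, including the observation that the endpoint constraint set is a coset of a connected unitary group.
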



\begin{proof}
It can be easily seen that the proof of Proposition~\ref{prop:_diagonalizability_1} works 
even if $\mathcal{A}_n$ and $\mathcal{A}_m$ are replaced by 
$\mathcal{Z}_{p, q}$ and $\mathcal{A}_{p'q'}$ respectively, 
so one can easily obtain a morphism $\psi$ from $\langle \mathcal{Z}_{p, q}, \tau \rangle$ 
into $\langle \mathcal{A}_{p'q'}, \sigma \rangle$ with 
$\|\varphi(g) - \psi(g)\| < \varepsilon$ for all $g \in G$.  
Moreover, a careful reading and a trivial modification of the third paragraph of 
the proof of Proposition~\ref{prop:_diagonalizability_1} enable us to take the unitary $u$ 
so that $\mathrm{ev}_0 \circ \varphi(f) = \mathrm{ev}_0 \circ \psi(f)$ and 
$\mathrm{ev}_1 \circ \varphi(f) = \mathrm{ev}_1 \circ \psi(f)$ hold for all 
$f \in \mathcal{Z}_{p, q}$.  
Therefore, we can take $\psi$ so that its image is included in $\mathcal{Z}_{p', q'}$, 
which completes the proof.  
\end{proof}


\begin{prop}\label{prop:_cutting-up_lamma_2}
Let $\tau, \sigma$ be faithful diffuse measures on $[0, 1]$.  
Then for any co-prime $p, q \in \mathbb{N}$, 
there exist co-prime $p', q' \in \mathbb{N}$ and a diagonalizable morphism 
$\varphi \colon \langle \mathcal{Z}_{p, q}, \tau \rangle 
\to \langle \mathcal{Z}_{p', q'}, \sigma \rangle$ such that the images of 
the maps associated to diagonal expression of $\varphi$ have diameters less than $\varepsilon$.  
\end{prop}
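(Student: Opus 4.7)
The plan is to construct $\varphi$ directly by partitioning $[0, 1]$ into many small subintervals. Given $\varepsilon > 0$, fix a partition $0 = s_0 < s_1 < \cdots < s_L = 1$ with $|s_\alpha - s_{\alpha - 1}| < \varepsilon$ for all $\alpha$, chosen (via the intermediate value theorem and diffuseness of $\tau$) so that each $\tau(J_\alpha)$ is a rational with common denominator $k := p'q'/(pq)$, for coprime $p', q'$ to be chosen later. For each $J_\alpha := [s_{\alpha-1}, s_\alpha]$, take $k_\alpha := k \tau(J_\alpha)$ monotone continuous functions $\xi_i \colon [0, 1] \to J_\alpha$, each either strictly increasing (so $\xi_i(0) = s_{\alpha-1}, \xi_i(1) = s_\alpha$) or strictly decreasing, with $\sigma$-push-forwards averaging to $k\tau|_{J_\alpha}$ so that the resulting diagonal morphism $\varphi$ is trace-preserving.

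Let $a_\alpha, b_\alpha$ denote the numbers of increasing and decreasing $\xi_i$'s in $J_\alpha$, so $a_\alpha + b_\alpha = k_\alpha$. The arithmetic conditions of Lemma~\ref{lem:_arithmetic_condition} translate into the linear congruence system
\[
	b_\gamma + a_{\gamma + 1} \equiv 0 \pmod{q'}, \qquad a_\gamma + b_{\gamma + 1} \equiv 0 \pmod{p'}, \qquad 1 \leq \gamma \leq L - 1,
\]
together with weaker endpoint conditions arising from the $q, p$ factors at $t = 0, 1$ in Eq.~(\ref{eq:_arithmetic_condition}). By choosing $p' = p p''$ and $q' = q q''$ with $p'', q''$ coprime, $p''$ coprime to $q$, and $q''$ coprime to $p$, the moduli split under the Chinese Remainder Theorem, and this system can be solved by a number-theoretic balancing argument parallel to the one in the proof of Proposition~\ref{prop:_extension_lemma}.

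Once the $\xi_i$'s are in place, Lemma~\ref{lem:_arithmetic_condition} provides a unitary $u \in \mathcal{A}_{p'q'}$ whose conjugation sends the diagonal morphism into $\mathcal{Z}_{p', q'}$, giving $\mathrm{Ad}(u) \circ \varphi \colon \langle \mathcal{Z}_{p, q}, \tau \rangle \to \langle \mathcal{Z}_{p', q'}, \sigma \rangle$ with each component map of image diameter at most $|J_\alpha| < \varepsilon$. The main obstacle is solving the congruence system: the interior conditions involve the full moduli $p', q'$, which is restrictive, so one must choose $L$, $p''$, $q''$ with care to reconcile these with the endpoint conditions and the trace-count $\sum_\alpha (a_\alpha + b_\alpha) = k$. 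This is the number-theoretic heart of the argument, and if the monotone-only ansatz turns out to be too rigid, it can be relaxed by allowing the additional boundary-value types $(\xi_i(0), \xi_i(1)) \in \{s_{\alpha-1}, s_\alpha\}^2$ to give four free counts per interval, or alternatively the construction can be iterated in the spirit of Proposition~\ref{prop:_cutting-up_lamma} to obtain the diameter bound incrementally.
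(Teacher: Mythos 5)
Your setup is sound up to the point where the actual work begins, and then it stops: you reduce the proposition to solving the congruence system $a_{\gamma+1}+b_\gamma \equiv 0 \pmod{q'}$, $a_\gamma+b_{\gamma+1} \equiv 0 \pmod{p'}$ for $\gamma = 1,\ldots,L-1$ (plus endpoint conditions and $\sum_\alpha(a_\alpha+b_\alpha)=k$), and then assert that this "can be solved by a number-theoretic balancing argument parallel to the one in the proof of Proposition~\ref{prop:_extension_lemma}." That assertion is the entire content of the proposition, and it is not parallel to Proposition~\ref{prop:_extension_lemma}: there the middle maps are surjections onto all of $[0,1]$, so $F_0$ and $F_1$ contain only $0$ and $1$ and there are \emph{no} interior congruences, whereas your scheme produces $L-1$ congruences modulo the full $p'$ and modulo the full $q'$, each constraining a sum bounded by $k_\gamma+k_{\gamma+1}\approx 2p'q'/(pqL)$. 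A positive multiple of $q'$ is then only available if $p'\gtrsim pqL$, the all-increasing choice is provably impossible for $L\geq 3$ (it forces $k_\alpha\equiv 0\pmod{p'q'}$ for interior $\alpha$ while $\sum k_\alpha = p'q'/(pq)<p'q'$), and the admissible solutions are coupled along the whole chain through a linear recurrence. You flag this yourself as "the number-theoretic heart of the argument" and offer to relax the ansatz or fall back on iteration if it fails; a proof cannot defer its own identified heart to an unspecified fix.

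The paper avoids this global system entirely. It performs a \emph{single} three-interval cut $[0,t_0]\cup[t_0,t_1]\cup[t_1,1]$ with the middle interval traversed by decreasing maps, so that $n_0^1=n_1^0=0$ and the only nontrivial counts are $n_0^{t_1}=k-r$ and $n_1^{t_0}=k-s$; the hypotheses of Lemma~\ref{lem:_arithmetic_condition} then reduce to the two independent congruences $r\equiv k\pmod{q_1}$ and $s\equiv k\pmod{p_1}$, trivially solvable with $r+s<k$ once $k=lm>\max\{2p_1,2q_1\}$. This yields images of diameter close to $1/3$, and the $\varepsilon$-bound is obtained by composing $d$ such morphisms as in Proposition~\ref{prop:_cutting-up_lamma}: since a composition of morphisms into dimension drop algebras is again a morphism, the congruence conditions for the composite's $3^d$ maps are satisfied automatically and never need to be solved by hand. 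If you want to salvage your direct approach, the honest route is to observe that the composite of the iterated construction \emph{is} a solution of your system, which is to say your "alternative" fallback is not an alternative but the proof.
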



\begin{proof}
The proof is similar to that of Proposition~\ref{prop:_cutting-up_lamma}, 
but this time, instead of dividing $[0, 1]$ into two intervals $[0, t_0]$ and $[t_0, 1]$, 
we divide $[0, 1]$ into three intervals $[0, t_0]$, $[t_0, t_1]$ and $[t_1, 1]$ 
the diameters of which are close to $1/3$, and use Lemma~\ref{lem:_arithmetic_condition}.  


Take integers $l > 2q$, $m > 2p$ so that $p_1 := lp$ and $q_1 := mq$ are co-prime, 
and set $k := lm$.  Then let $r, s$ be natural numbers such that 
\[
	r \equiv k \pmod {q_1}, \qquad s \equiv k \pmod {p_1}, \qquad r+s < k.  
\]
We can always find such $r$ and $s$ because $k = lm > \max\{2p_1, 2q_1\}$.  
Since $\tau$ is diffuse, there are $t_0 < t_1$ in $(0, 1)$ such that 
$\tau([0, t_0]) = r/k$ and $\tau([t_1, 1]) = s/k$.  
Here, we may assume that the diameters of $[0, t_1]$ and $[t_2, 1]$ are arbitrarily close to $1/3$, 
because $l$ and $m$ can be arbitrarily large and so $q_1/k$ and $p_1/k$ can be arbitrarily small.  


Now, put $I_1 := [0, t_0]$, $I_2 := [t_0, t_1]$ and $I_3 := [t_1, 1]$, 
and let $\tau_i$ be the normalization of $\tau|_{I_i}$.  
By Lemma~\ref{lem:_measures_and_monotone_functions}, 
we can find continuous function $\eta_i$ from $[0, 1]$ onto $I_i$ such that 
\begin{itemize}
	\item $\eta_1$ and $\eta_3$ are increasing; 
	\item $\eta_2$ is decreasing; and
	\item $(\eta_i)_*(\sigma) = \tau_i$.  
\end{itemize}
We shall set 
\[
	\xi_j := \left\{
		\begin{array}{ll}
			\eta_1 & \text{if } j = 1, \ldots, r, \\
			\eta_2 & \text{if } j = r+1, \ldots, k-s, \\
			\eta_3 & \text{if } j = k-s+1, \ldots, k, 
		\end{array}
		\right.
\]
and check the assumption of Lemma~\ref{lem:_arithmetic_condition}.  
Let $n_s^t = n_s^t(\xi)$ be as in Notation~\ref{nttn:_fs_and_nst}.  
Then clearly $n_0^1 = n_1^0 = 0$.  Also, 
\[
	q n_0^0 = qr \equiv qk = q_1 l \equiv 0 \pmod {q_1}, 
\] 
and 
\[
	p n_1^1 = ps \equiv pk = p_1 m \equiv 0 \pmod {p_1}, 
\]
so Eq.~(\ref{eq:_arithmetic_condition}) holds, as desired.  
Consequently, there is a diagonalizable morphism 
$\varphi_1 \colon \langle \mathcal{Z}_{p, q}, \tau \rangle 
\to \langle \mathcal{Z}_{p', q'}, \sigma \rangle$ such that the ranges of
the maps associated to $\varphi_1$ have their diameters arbitrarily close to $1/3$.  
The remaining of the proof is now the same as Proposition~\ref{prop:_cutting-up_lamma}, 
so we are done.  
\end{proof}


\begin{lem}\label{lem:_automorphism_of_dimension_drop_algebras}
Let $\varphi, \psi$ be $*$-homomorphisms from $\mathcal{Z}_{p, q}$ into $\mathcal{Z}_{p', q'}$ 
of the form 
\[
\begin{aligned}
	\varphi(f) = \mathrm{Ad}(u) 
		\bigl( \mathrm{diag}(f \circ \xi_1, \ldots, f \circ \xi_k) \bigr), \\
	\psi(f) = \mathrm{Ad}(v) 
		\bigl( \mathrm{diag}(f \circ \xi_1, \ldots, f \circ \xi_k) \bigr).  
\end{aligned}
\]
Then, for any finite subset $G \subseteq \mathcal{Z}_{p, q}$ and any $\varepsilon > 0$, 
there exists a unitary $w \in \mathcal{A}_{p'q'}$ such that 
the inner automorphism $\mathrm{Ad}(w)$ of $\mathcal{A}_{p'q'}$ preserves $\mathcal{Z}_{p', q'}$ 
and $\|\mathrm{Ad}(w) \circ \varphi(g) - \psi(g)\| < \varepsilon$ holds for all $g \in G$.  
\end{lem}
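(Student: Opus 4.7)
The plan is to set $h := vu^*$, so that $\psi(f) = \mathrm{Ad}(h)\bigl(\varphi(f)\bigr)$ identically in $\mathcal{A}_{p'q'}$, and then to modify $h$ in small neighbourhoods of the endpoints $0$ and $1$. The modified unitary $w$ should lie in the normalizer of $\mathcal{Z}_{p',q'}$, agree with $h$ on the middle subinterval, and be close enough to $h$ near the endpoints that the resulting conjugation stays within $\varepsilon$ of $\mathrm{Ad}(h)$ on the finite set $\varphi(G)$.

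Put $A_s := \varphi(\mathcal{Z}_{p,q})(s)$ for $s = 0, 1$. Since both $\varphi$ and $\psi$ take values in $\mathcal{Z}_{p',q'}$, the algebras $A_0$ and $\psi(\mathcal{Z}_{p,q})(0)$ are unital $*$-subalgebras of $\mathbb{M}_{p'} \otimes 1_{\mathbb{M}_{q'}}$, and $A_1$ and $\psi(\mathcal{Z}_{p,q})(1)$ are unital $*$-subalgebras of $1_{\mathbb{M}_{p'}} \otimes \mathbb{M}_{q'}$. The restriction of $\mathrm{Ad}(h(s))$ to $A_s$ is a unital $*$-isomorphism onto $\psi(\mathcal{Z}_{p,q})(s)$, and the multiplicities of the two inclusions inside the boundary subfactor agree because both are determined by the common diagonal family $\xi_1, \ldots, \xi_k$ evaluated at $s$. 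A Noether--Skolem argument for unital subalgebras of matrix algebras therefore yields unitaries $u_0' \in \mathbb{M}_{p'} \otimes 1_{\mathbb{M}_{q'}}$ and $u_1' \in 1_{\mathbb{M}_{p'}} \otimes \mathbb{M}_{q'}$ implementing the same automorphisms on $A_0$ and $A_1$ as $h(0)$ and $h(1)$ respectively.

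Write $c_s := h(s)^* u_s'$. A direct verification shows that $c_s$ belongs to the commutant $A_s' \cap \mathbb{M}_{p'q'}$, which is a finite-dimensional C*-algebra and in particular has a path-connected unitary group. Next I would choose $\delta > 0$ small enough that $\|\varphi(g)(t) - \varphi(g)(s)\| < \varepsilon/2$ whenever $s, t$ lie together in $[0, \delta]$ or in $[1-\delta, 1]$ and $g \in G$, and select continuous paths $\gamma_0 \colon [0, \delta] \to U(A_0' \cap \mathbb{M}_{p'q'})$ with $\gamma_0(0) = c_0$ and $\gamma_0(\delta) = 1$, together with an analogous path $\gamma_1 \colon [1-\delta, 1] \to U(A_1' \cap \mathbb{M}_{p'q'})$. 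Define $k \in \mathcal{A}_{p'q'}$ to equal $\gamma_0, 1, \gamma_1$ on the three respective subintervals, and set $w := hk$.

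Then $w(0) = u_0'$ and $w(1) = u_1'$ sit in the desired boundary subfactors, so $\mathrm{Ad}(w)$ preserves $\mathcal{Z}_{p',q'}$. For the approximation, one writes $\mathrm{Ad}(w) \circ \varphi(g) - \psi(g) = h\bigl(\mathrm{Ad}(k)\varphi(g) - \varphi(g)\bigr)h^*$, which vanishes on $[\delta, 1-\delta]$; on $[0, \delta]$, since $k(t)$ lies in $A_0'$ and therefore commutes exactly with $\varphi(g)(0)$, one obtains the pointwise estimate $\|\mathrm{Ad}(k(t))\varphi(g)(t) - \varphi(g)(t)\| \leq 2\|\varphi(g)(t) - \varphi(g)(0)\| < \varepsilon$, and symmetrically on $[1-\delta, 1]$. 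The main obstacle is the extension step producing $u_0'$ and $u_1'$ inside the correct boundary subfactor rather than merely inside $\mathbb{M}_{p'q'}$; this step crucially relies on the pinching condition satisfied by both $\varphi$ and $\psi$, which forces the relevant boundary images to live in the same smaller matrix block.
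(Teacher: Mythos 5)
Your proposal is correct and follows essentially the same strategy as the paper's proof: reduce to correcting $vu^*$ only near the endpoints by a unitary lying in the commutant of the boundary image algebra, use the multiplicity/uniqueness argument for finite-dimensional subalgebras of matrix algebras to place that correction in the appropriate boundary block, interpolate to the identity along a path in that commutant over $[0,\delta]$ and $[1-\delta,1]$, and conclude by uniform continuity. The only difference is cosmetic — you work in the image frame with $h=vu^*$ while the paper conjugates back by $u^*$ and $v^*$ and corrects $\mathcal{C}_s^\varphi$ onto $\mathcal{C}_s^\psi$ while fixing $\mathcal{B}_s$ — so no further comparison is needed.
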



\begin{proof}
Let $\rho$ be a the $*$-homomorphism from $\mathcal{Z}_{p, q}$ into $\mathcal{A}_{p'q'}$ defined by
\[
	\rho(f) = \mathrm{diag}(f \circ \xi_1, \ldots, f \circ \xi_k),  
\]
and put 
\[
\begin{aligned}
	\mathcal{B}_s &:= \mathrm{ev}_s \circ \rho[\mathcal{Z}_{p, q}], \\
	\mathcal{C}_s^\varphi &:= \mathrm{ev}_s \circ \mathrm{Ad}(u^*)[\mathcal{Z}_{p', q'}] 
	\quad \text{and} \quad 
	\mathcal{C}_s^\psi := \mathrm{ev}_s \circ \mathrm{Ad}(v^*)[\mathcal{Z}_{p', q'}]
\end{aligned}
\]
for $s = 0, 1$.  
Then, $\mathcal{C}_s^\varphi$ and $\mathcal{C}_s^\psi$ are subalgebras of $\mathbb{M}_{p'q'}$ 
which are isomorphic to each other, and $\mathcal{B}_s$ is included in both of them.  
It is not difficult to find a unitary $w'_s$ in the commutant $(\mathcal{B}_s)'$ 
of $\mathcal{B}_s$ which induces the isomorphism of 
$\mathcal{C}_s^\varphi$ onto $\mathcal{C}_s^\psi$.  


Now, take $\delta > 0$ so that $|t_1-t_2| < \delta$ implies 
$\|g \circ \xi_i(t_1) - g \circ \xi_i(t_2)\| < \varepsilon/2$, 
and let $w'$ be a unitary in $\mathcal{A}_{p'q'}$ such that 
\begin{itemize}
	\item $w'(0) = w'_0$ and $w'(1) = w'_1$; 
	\item $w'(t) = 1$ for $t \in [\delta, 1-\delta]$; and 
	\item the images of $w'|_{[0, \delta]}$ and $w'|_{[1-\delta, 1]}$ is included in 
		$(\mathcal{B}_0)'$ and $(\mathcal{B}_1)'$ respectively.  
\end{itemize}
Then, clearly the inner automorphism induced by $w := vw'u^*$ preserves $\mathcal{Z}_{p', q'}$.  
Also, for $g \in G$ and $t \in [0, \delta]$, 
\begin{align*}
	\mathrm{ev}_t \circ \mathrm{Ad}(w) \circ \varphi(g)
		&\ = \ \mathrm{Ad}(v(t)w'(t)) 
			\bigl( \mathrm{diag}(g \circ \xi_1(t), \ldots, g \circ \xi_k(t)) \bigr) \\
		&\sim_{\varepsilon/2} \mathrm{Ad}(v(t)w'(t)) 
			\bigl( \mathrm{diag}(g \circ \xi_1(0), \ldots, g \circ \xi_k(0)) \bigr) \\
		&\ = \ \mathrm{Ad}(v(t)) 
			\bigl( \mathrm{diag}(g \circ \xi_1(0), \ldots, g \circ \xi_k(0)) \bigr) \\
		&\sim_{\varepsilon/2} \mathrm{Ad}(v(t)) 
			\bigl( \mathrm{diag}(g \circ \xi_1(t), \ldots, g \circ \xi_k(t)) \bigr) \\
		&\ = \ \mathrm{ev}_{t} \circ \psi(g).  
\end{align*}
Similarly, it follows that $\mathrm{ev}_t \circ \mathrm{Ad}(w) \circ \varphi(g) \sim_\varepsilon 
\mathrm{ev}_t \circ \psi(g)$ if $t$ is in $[1-\delta, 1]$, and 
it is obvious that $\mathrm{ev}_t \circ \mathrm{Ad}(w) \circ \varphi(g) = 
\mathrm{ev}_t \circ \psi(g)$ if $t$ is in $[\delta, 1-\delta]$.  
Consequently, $\|\mathrm{Ad}(w) \circ \varphi(g) - \psi(g)\|$ is less than $\varepsilon$ for 
all $g \in G$, which completes the proof.  
\end{proof}


\begin{prop}
The class $\mathscr{K}$ satisfies NAP.   
\end{prop}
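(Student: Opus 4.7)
The plan is to adapt the proof of Proposition~\ref{prop:_nap_for_matrix-valued_function_algebras} (NAP for the UHF case) to the dimension drop setting, substituting the Jiang-Su analogues of the ingredients used there and inserting one crucial extra step at the end.

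Given morphisms $\varphi_1 \colon \langle \mathcal{Z}_{p_0, q_0}, \tau_0 \rangle \to \langle \mathcal{Z}_{p'_1, q'_1}, \sigma' \rangle$ and $\varphi_2 \colon \langle \mathcal{Z}_{p_0, q_0}, \tau_0 \rangle \to \langle \mathcal{Z}_{p''_1, q''_1}, \sigma'' \rangle$, a finite subset $G \subseteq \mathcal{Z}_{p_0, q_0}$ and $\varepsilon > 0$, I would first apply JEP for $\mathscr{K}$ (the corollary just proved) to reduce to the case where both targets are a single $\langle \mathcal{Z}_{p_1, q_1}, \tau_1 \rangle$ with $\tau_1$ a diffuse faithful trace. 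Next, by Proposition~\ref{prop:_diagonalizability_2}, I can assume that $\varphi_1, \varphi_2$ are already diagonalizable. Then I apply Proposition~\ref{prop:_cutting-up_lamma_2} to obtain a morphism $\rho$ from $\langle \mathcal{Z}_{p_1, q_1}, \tau_1 \rangle$ into some $\langle \mathcal{Z}_{p_2, q_2}, \tau_2 \rangle$ with $\tau_2$ diffuse, such that the associated maps of each $\rho \circ \varphi_i$ have diameters less than $\delta/3$, where $\delta$ is chosen so that $|s-t| < \delta$ implies $|g(s) - g(t)| < \varepsilon$ for all $g \in G$. Applying Proposition~\ref{prop:_diagonalizability_2} once more, I can approximate each $\rho \circ \varphi_i$ by a diagonalizable morphism $\Phi_i$ whose associated maps $\xi^i_1 \leq \cdots \leq \xi^i_k$ still have images of diameter less than $\delta/3$.

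The trace-counting argument from Proposition~\ref{prop:_nap_for_matrix-valued_function_algebras} then transfers verbatim: since both $\Phi_1$ and $\Phi_2$ are trace-preserving with sorted families of maps whose images are small, the inequality $\|\xi^1_j - \xi^2_j\| < \delta$ must hold for every $j$, and hence
\[
    \bigl\| \mathrm{diag}(g \circ \xi^1_1, \ldots, g \circ \xi^1_k) - \mathrm{diag}(g \circ \xi^2_1, \ldots, g \circ \xi^2_k) \bigr\| < \varepsilon
\]
for all $g \in G$. Writing $\Phi_i(f) = \mathrm{Ad}(u_i)\bigl(\mathrm{diag}(f \circ \xi^i_1, \ldots, f \circ \xi^i_k)\bigr)$, this means $\mathrm{Ad}(u_1 u_2^*) \circ \Phi_2$ is within $\varepsilon$ of $\Phi_1$ on $G$.

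The main obstacle is the last step. In the UHF proof one simply set $\psi_2 := \mathrm{Ad}(u_1 u_2^*) \circ \rho$, but here $\mathrm{Ad}(u_1 u_2^*)$ need not preserve the subalgebra $\mathcal{Z}_{p_2, q_2}$ of $\mathcal{A}_{p_2 q_2}$, so this definition would leave the dimension drop category. This is exactly what Lemma~\ref{lem:_automorphism_of_dimension_drop_algebras} is designed to fix: applied to the pair $\mathrm{Ad}(u_1 u_2^*) \circ \Phi_2$ and $\Phi_1$ (which share the same diagonal part), it yields a unitary $w \in \mathcal{A}_{p_2 q_2}$ with $\mathrm{Ad}(w)(\mathcal{Z}_{p_2, q_2}) = \mathcal{Z}_{p_2, q_2}$ and $\|\mathrm{Ad}(w) \circ \mathrm{Ad}(u_1 u_2^*) \circ \Phi_2(g) - \Phi_1(g)\| < \varepsilon$ on $G$. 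Setting $\psi_1 := \rho$ and $\psi_2 := \mathrm{Ad}(w u_1 u_2^*) \circ \rho$ then gives two morphisms of $\mathscr{K}$ with $\|\psi_1 \circ \varphi_1(g) - \psi_2 \circ \varphi_2(g)\|$ bounded by a fixed multiple of $\varepsilon$ on $G$, which, after absorbing constants into $\varepsilon$, completes the proof.
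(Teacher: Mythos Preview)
Your argument is on the right track up to and including the trace-counting step, but the final application of Lemma~\ref{lem:_automorphism_of_dimension_drop_algebras} does not go through as written. That lemma has two hypotheses on the pair of maps: both must land in $\mathcal{Z}_{p',q'}$, and they must share the \emph{same} family $\xi_1,\ldots,\xi_k$ (only the outer unitaries are allowed to differ). Your pair $\bigl(\mathrm{Ad}(u_1u_2^*)\circ\Phi_2,\ \Phi_1\bigr)$ fails both. Computing $\mathrm{Ad}(u_1u_2^*)\circ\Phi_2(f)=\mathrm{Ad}(u_1)\bigl(\mathrm{diag}(f\circ\xi^2_1,\ldots,f\circ\xi^2_k)\bigr)$ shows that these two maps share the same \emph{unitary} $u_1$ but have \emph{different} diagonal parts $\xi^1$ versus $\xi^2$; and as you yourself noted, $\mathrm{Ad}(u_1u_2^*)\circ\Phi_2$ need not land in $\mathcal{Z}_{p_2,q_2}$ at all. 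So the lemma simply does not apply.

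The missing ingredient is the ``moreover'' clause of Lemma~\ref{lem:_arithmetic_condition}. Since each $\Phi_i$ is already a morphism into $\mathcal{Z}_{p_2,q_2}$, the numbers $n_s^t(\xi^i)$ satisfy the congruences~(\ref{eq:_arithmetic_condition}); because the $\xi^i_j$ are ordered, the lemma furnishes a \emph{single} unitary $v\in\mathcal{A}_{p_2q_2}$ such that both $\Psi_i:=\mathrm{Ad}(vu_i^*)\circ\Phi_i=\mathrm{Ad}(v)\bigl(\mathrm{diag}(f\circ\xi^i_j)\bigr)$ land in $\mathcal{Z}_{p_2,q_2}$. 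Now Lemma~\ref{lem:_automorphism_of_dimension_drop_algebras} can be applied legitimately, once for each $i$, to the pair $(\Phi_i,\Psi_i)$: these share the diagonal part $\xi^i$ and both map into $\mathcal{Z}_{p_2,q_2}$. This yields unitaries $w_i$ with $\mathrm{Ad}(w_i)$ preserving $\mathcal{Z}_{p_2,q_2}$ and $\mathrm{Ad}(w_i)\circ\Phi_i\sim_\varepsilon\Psi_i$. Setting $\psi_1:=\rho$ and $\psi_2:=\mathrm{Ad}(w_1^*w_2)\circ\rho$ then works, because the $\Psi_i$ share the unitary $v$ and so differ only through the close diagonal parts $\xi^1,\xi^2$.
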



\begin{proof}
Let $\varphi_1, \varphi_2 \colon \langle \mathcal{Z}_{p_0, q_0}, \tau_0 \rangle \to  
\langle \mathcal{Z}_{p_1, q_1}, \tau_1 \rangle$ be diagonalizable morphisms, 
$G$ be a finite subset included in the unit ball of $\mathcal{Z}_{p, q}$, 
and $\varepsilon$ be a positive real number.  
By JEP, it suffices to find morphisms $\psi_1, \psi_2$ from 
$\langle \mathcal{Z}_{p_1, q_1}, \tau_1 \rangle$ into 
some $\langle \mathcal{Z}_{p_2, q_2}, \tau_2 \rangle \in \mathscr{K}$ such that 
$\|\psi_1 \circ \varphi_1(g) - \psi_2 \circ \varphi_2(g)\|<\varepsilon$ holds for all $g \in G$.  


Take $\delta > 0$ so that $|s-t|<\delta$ implies $\|g(s)-g(t)\|<\varepsilon$.  
As in the proof of Proposition~\ref{prop:_nap_for_matrix-valued_function_algebras}, 
we can find a diagonalizable morphism $\rho$ from $\langle \mathcal{Z}_{p_1, q_1}, \tau_1 \rangle$ 
into some $\langle \mathcal{Z}_{p_2, q_2}, \tau_2 \rangle \in \mathscr{K}$, 
and diagonalizable morphisms $\Phi_1, \Phi_2$ from 
$\langle \mathcal{Z}_{p_0, q_0}, \tau_0 \rangle$ into 
$\langle \mathcal{Z}_{p_2, q_2}, \tau_2 \rangle$ with the following properties: 
\begin{itemize}
	\item the inequality $\| \rho \circ \varphi_i(g) - \Phi_i(g) \| < \varepsilon$ holds 
		for all $g \in G$; and
	\item there is a diagonal expression 
		\[
			\Phi_i(f) = \mathrm{Ad}(u_i) 
				\bigl( \mathrm{diag}(f \circ \xi^i_1, \ldots, f \circ \xi^i_k) \bigr) 
		\]
		such that $\xi^i_1 \leq \cdots \leq \xi^i_k$ for each $i$, 
		and $\|\xi^1_j - \xi^2_j\|<\delta$ for all $j$.  
\end{itemize}
By Lemma~\ref{lem:_arithmetic_condition}, 
there exists a unitary $v \in \mathcal{A}_{p'q'}$ such that the image of 
$\Psi_i := \mathrm{Ad}(vu_i^*) \circ \Phi_i$ is included in $\mathcal{Z}_{p', q'}$.  
Then by Lemma~\ref{lem:_automorphism_of_dimension_drop_algebras}, 
there exists a unitary $w_i \in \mathcal{A}_{p'q'}$ such that 
the inner automorphism $\mathrm{Ad}(w_i)$ preserves $\mathcal{Z}_{p', q'}$, 
and that $\|\mathrm{Ad}(w_i) \circ \Phi_i(g) - \Psi_i(g)\| < \varepsilon$ holds for all $g \in G$.  
We put $\psi_1 := \rho$ and $\psi_2 := \mathrm{Ad}(w_1^*w_2) \circ \rho$.  
Then for $g \in G$, we have 
\begin{align*}
	\psi_2 \circ \varphi_2(g) 
		&= \mathrm{Ad}(w_1^*w_2) \circ \rho \circ \varphi_2(g) \\
		&\sim_\varepsilon \mathrm{Ad}(w_1^*w_2) \circ \Phi_2(g) \\
		&\sim_\varepsilon \mathrm{Ad}(w_1^*) \circ \Psi_2(g) \\
		&= \mathrm{Ad}(w_1^*v) 
			\bigl( \mathrm{diag}(g \circ \xi^2_1, \ldots, g \circ \xi^2_k) \bigr) \\
		&\sim_\varepsilon \mathrm{Ad}(w_1^*v) 
			\bigl( \mathrm{diag}(g \circ \xi^1_1, \ldots, g \circ \xi^1_k) \bigr) \\
		&= \mathrm{Ad}(w_1^*) \circ \Psi_1(g) \\
		&\sim_\varepsilon \Phi_1(g) \\
		&\sim_\varepsilon \psi_1 \circ \varphi_1(g), 
\end{align*}
which completes the proof.  
\end{proof}


The following theorem can be shown in almost the same way as 
Theorem~\ref{thm:_knu_is_a_fraisse_class}.  We omit details.  

\begin{thm}
The class $\mathscr{K}$ is a Fra\"{i}ss\'{e} class.  
\end{thm}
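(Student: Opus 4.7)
The plan is to verify each of the four axioms of a Fra\"{i}ss\'{e} class, namely JEP, NAP, WPP and CCP, for the class $\mathscr{K}$. Three of them are essentially already in place: JEP is the content of the unnumbered Corollary following Proposition~\ref{prop:_extension_lemma}, NAP is the content of the unnumbered Proposition immediately preceding this theorem, and CCP is automatic by Remark~\ref{rem:_on_ccp}, since every member of $\mathscr{K}$ is a unital C*-algebra with trace and all the symbol interpretations are $1$-Lipschitz on the unit ball. Thus the only real work consists in verifying WPP, i.e.\ separability of each $\mathscr{K}_n$ with respect to the pseudometric $d^\mathscr{K}$.

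For WPP I would mirror the corresponding step in the proof of Theorem~\ref{thm:_knu_is_a_fraisse_class}. First fix an enumeration of all co-prime pairs $(p_i, q_i)$ of positive integers, and for each $i$ choose a countable norm-dense subset $D_i \subseteq \mathcal{Z}_{p_i, q_i}$; such a $D_i$ exists because $\mathcal{Z}_{p_i, q_i}$ is a separable C*-algebra. Write $\lambda$ for the Lebesgue measure on $[0, 1]$, regarded as a faithful diffuse trace on every dimension drop algebra. I claim that the countable collection $\bigl\{ \bigl( \langle \mathcal{Z}_{p_i, q_i}, \lambda \rangle, \vec{a} \bigr) \bigm| i \in \mathbb{N}, \ \vec{a} \in D_i^n \bigr\}$ is dense in $\mathscr{K}_n$ with respect to $d^\mathscr{K}$.

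To verify the claim, given any $(A, \vec{a}) \in \mathscr{K}_n$ with $A = \langle \mathcal{Z}_{p, q}, \tau \rangle$ and any $\varepsilon > 0$, Proposition~\ref{prop:_embeddability_lemma_2} supplies a morphism $\varphi$ from $\langle \mathcal{Z}_{p, q}, \tau \rangle$ into $\langle \mathcal{Z}_{p, q}, \lambda \rangle$, and norm-density of the corresponding $D_i$ produces $\vec{b} \in D_i^n$ with $\|\varphi(a_j) - b_j\| < \varepsilon$ for every $j$. Using $\varphi$ and the identity on the reference structure as witnesses to the infimum in the definition of $d^\mathscr{K}$ then yields $d^\mathscr{K}\bigl((A, \vec{a}), (\langle \mathcal{Z}_{p, q}, \lambda \rangle, \vec{b})\bigr) < \varepsilon$, which establishes WPP.

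I do not anticipate any serious obstacle here. All of the substantive combinatorial content of this section lives in the verifications of JEP and NAP, where the pinching conditions at the endpoints $0$ and $1$ had to be controlled via the arithmetic congruences of Lemma~\ref{lem:_arithmetic_condition}. Once those properties are in hand, WPP follows from nothing more than the embeddability lemma combined with separability of each individual dimension drop algebra, in exact parallel with the analogous step of Theorem~\ref{thm:_knu_is_a_fraisse_class}; this is presumably why the paper remarks that the proof is almost the same and omits the details.
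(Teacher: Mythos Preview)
Your outline is exactly the paper's approach: defer JEP, NAP and CCP to the earlier results and to Remark~\ref{rem:_on_ccp}, and verify WPP by hand in parallel with Theorem~\ref{thm:_knu_is_a_fraisse_class}. There is, however, one technical point in your WPP argument that needs care. By definition, $\mathscr{K}_n$ consists of pairs $(A,\vec a)$ where $\vec a$ \emph{generates} $A$, so your proposed countable set $\{(\langle\mathcal{Z}_{p_i,q_i},\lambda\rangle,\vec a):\vec a\in D_i^n\}$ is not a subset of $\mathscr{K}_n$ unless you restrict to generating tuples. More seriously, when $\tau$ has atoms the morphism $\varphi$ supplied by Proposition~\ref{prop:_embeddability_lemma_2} is not surjective, so $\varphi(\vec a)$ generates only the proper subalgebra $\varphi[\mathcal{Z}_{p,q}]$ and there is no reason it should be norm-approximable by generating tuples of $\mathcal{Z}_{p,q}$.

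The clean repair---and presumably what the paper intends by ``easily verified from the proof of Proposition~\ref{prop:_embeddability_lemma}''---is to vary the trace rather than the tuple first. For each coprime $(p,q)$ fix a countable family $T_{p,q}$ of faithful traces whose associated non-decreasing surjections $\beta_{\tau'}$ from Lemma~\ref{lem:_measures_and_monotone_functions} are uniformly dense among all such surjections, and a countable set $G_{p,q,n}$ dense in the space of generating $n$-tuples of $\mathcal{Z}_{p,q}$ (a subspace of a separable space, hence separable). Given $(\langle\mathcal{Z}_{p,q},\tau\rangle,\vec a)\in\mathscr{K}_n$, choose $\tau'\in T_{p,q}$ with $\|\beta_{\tau'}-\beta_\tau\|$ small; the embeddings $f\mapsto f\circ\beta_\tau$ and $f\mapsto f\circ\beta_{\tau'}$ into $\langle\mathcal{Z}_{p,q},\lambda\rangle$ witness that $(\langle\mathcal{Z}_{p,q},\tau\rangle,\vec a)$ and $(\langle\mathcal{Z}_{p,q},\tau'\rangle,\vec a)$ are $d^{\mathscr K}$-close, and $\vec a$ still generates since the underlying C*-algebra is unchanged. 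Then choose $\vec b\in G_{p,q,n}$ norm-close to $\vec a$; the identity on $\langle\mathcal{Z}_{p,q},\tau'\rangle$ witnesses that $(\langle\mathcal{Z}_{p,q},\tau'\rangle,\vec a)$ and $(\langle\mathcal{Z}_{p,q},\tau'\rangle,\vec b)$ are $d^{\mathscr K}$-close. The countable set $\{(\langle\mathcal{Z}_{p,q},\tau'\rangle,\vec b):\tau'\in T_{p,q},\ \vec b\in G_{p,q,n}\}$ is then $d^{\mathscr K}$-dense in $\mathscr{K}_n$.
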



We close this section by showing that the Fra\"{i}ss\'{e} limit of 
$\mathscr{K}$ is simple and monotracial.  
This fact together with \cite[Theorem 6.2]{jiang99:_simple_unital} implies that 
the Fra\"{i}ss\'{e} limit is indeed the Jiang-Su algebra.  


\begin{lem}\label{lem:_trace_and_morphisms}
For a measure $\tau$ on $[0, 1]$, let $E(\tau)$ be the set of all morphisms 
from $\langle \mathcal{Z}_{1, 1}, \tau \rangle$ into 
some $\langle \mathcal{Z}_{p, q}, \tau' \rangle \in \mathscr{K}$.  
If $\tau$ is diffuse and faithful, and if $\sigma$ is a measure 
with $E(\sigma) \supseteq E(\tau)$, then $\sigma = \tau$.  
\end{lem}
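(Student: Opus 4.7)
The plan is to show that the cumulative distribution functions $F_\tau(t) := \tau([0,t])$ and $F_\sigma(t) := \sigma([0,t])$ agree on a dense subset of $[0,1]$ and then invoke regularity of measures to conclude $\sigma = \tau$. Note that $\mathcal{Z}_{1,1}$ is canonically $C([0,1])$, so a morphism from $\langle \mathcal{Z}_{1,1}, \mu\rangle$ is simply a unital $*$-homomorphism with a suitable trace-preserving condition. Because $\tau$ is diffuse and faithful, $F_\tau$ is a homeomorphism of $[0,1]$, so $D := F_\tau^{-1}(\mathbb{Q}\cap(0,1))$ is dense in $[0,1]$.

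For each rational $a/l \in (0,1)$ with $l \ge 3$ and $1 \le a \le l-1$, I would pick a prime $m > l$ (automatically coprime to $l$) and imitate the single-step construction in the proof of Proposition~\ref{prop:_cutting-up_lamma_2} with $p = q = 1$, $(p',q') = (l,m)$, and the specific choice $r = am$, $s = l$. Since $m > l$ guarantees $r + s = am + l < lm = k$, and the congruence conditions of Lemma~\ref{lem:_arithmetic_condition} are satisfied, this yields a diagonalizable morphism $\Phi \colon \langle \mathcal{Z}_{1,1}, \tau\rangle \to \langle \mathcal{Z}_{l,m}, \sigma^{*}\rangle$ (for any fixed diffuse faithful $\sigma^{*}$) whose $k$ diagonal maps group as $\eta_1 \colon [0,1] \to [0,t_0]$ increasing (used $r$ times), $\eta_2 \colon [0,1] \to [t_0,t_1]$ decreasing (used $k-r-s$ times), and $\eta_3 \colon [0,1] \to [t_1,1]$ increasing (used $s$ times), with $t_0 := F_\tau^{-1}(a/l) \in D$ and $t_1 := F_\tau^{-1}(1 - 1/m)$. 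In particular $\Phi \in E(\tau)$.

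By hypothesis $\Phi \in E(\sigma)$, so some faithful measure $\sigma''$ on $[0,1]$ satisfies
\[
  \sigma \;=\; \tfrac{r}{k}(\eta_1)_{*}\sigma'' + \tfrac{k-r-s}{k}(\eta_2)_{*}\sigma'' + \tfrac{s}{k}(\eta_3)_{*}\sigma''.
\]
Using $\eta_1^{-1}(\{t_0\}) = \eta_2^{-1}(\{t_0\}) = \{1\}$ while $\eta_3$ misses $[0,t_0]$, evaluating this identity at $[0,t_0]$ and at $[0,t_0)$ gives
\[
  F_\sigma(t_0^{-}) \;=\; \tfrac{r}{k}\bigl(1 - \sigma''(\{1\})\bigr) \;\le\; \tfrac{r}{k} \;\le\; \tfrac{r}{k} + \tfrac{k-r-s}{k}\sigma''(\{1\}) \;=\; F_\sigma(t_0),
\]
and since $r/k = a/l = F_\tau(t_0)$, we conclude $F_\sigma(t_0^{-}) \le F_\tau(t_0) \le F_\sigma(t_0)$ for every $t_0 \in D$. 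Finally, for any $t^{*} \in [0,1]$, approaching $t^{*}$ through $D$ from below (if $t^{*}>0$) yields $F_\sigma(t^{*-}) \ge F_\tau(t^{*})$ by continuity of $F_\tau$ and monotone continuity of $\sigma$, while approaching from above (if $t^{*}<1$) yields $F_\sigma(t^{*}) \le F_\tau(t^{*})$; combined with $F_\sigma(t^{*-}) \le F_\sigma(t^{*})$ and the boundary values at $0$ and $1$, this forces $F_\sigma = F_\tau$, so $\sigma = \tau$.

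The main hurdle is the pushforward computation producing the two-sided sandwich $F_\sigma(t_0^{-}) \le r/k \le F_\sigma(t_0)$ at each $t_0 \in D$; the subtlety is that $\sigma''$ need not be diffuse, so the contributions of $\eta_2$ at the endpoint $t_0$ are not automatically zero, but this ambiguity is confined to $\sigma''(\{1\})$ and neatly cancels once one squeezes via a dense sequence in $D$.
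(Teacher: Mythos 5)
Your proof is correct. It takes a genuinely different route from the paper's, although both rest on the same underlying mechanism: if a diagonalizable morphism with associated maps $\xi_1, \ldots, \xi_k$ belongs to $E(\sigma)$, then $\sigma$ is forced to be the convex combination $\frac{1}{k}\sum_j (\xi_j)_*\sigma''$ of pushforwards under those maps. The paper argues by contradiction: assuming $\sigma([0,s]) = \tau([0,t])$ for some $s < t$, it invokes Proposition~\ref{prop:_cutting-up_lamma_2} as a black box to produce a morphism in $E(\tau)$ whose associated maps have images of diameter less than $(t-s)/3$, and then compares the count of maps whose images meet $[0,s]$ with the count of those whose images lie in $[0,t]$ to obtain $\sigma([0,s]) < \sigma([0,s])$. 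You instead reopen the one-step construction inside Proposition~\ref{prop:_cutting-up_lamma_2} with the tailored parameters $r = am$, $s = l$ (which do satisfy the hypotheses: $r \equiv 0 \pmod m$, $s \equiv 0 \pmod l$, $k - r \equiv 0 \pmod m$, $k - s \equiv 0 \pmod l$, and $r + s < k$ since $m > l$, so Lemma~\ref{lem:_arithmetic_condition} applies), so that the first block of maps covers exactly $[0, F_\tau^{-1}(a/l)]$ with total weight $a/l$. This pins down $F_\sigma$ exactly at a dense set of points rather than merely producing a one-sided contradiction; the price is that you must track the possible atom $\sigma''(\{1\})$ of the (not necessarily diffuse) target measure at the seam $t_0$, which you correctly isolate and then squeeze away using density and the continuity of $F_\tau$. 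Your version is more constructive and quantitatively explicit; the paper's is shorter because the two-point comparison and the strict inclusion $S \subsetneq T$ let it avoid both the bespoke choice of $r, s$ and the endpoint-atom bookkeeping.
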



\begin{proof}
Suppose $\sigma \neq \tau$.  
Then there exists $s \in (0, 1)$ with $\sigma([0, s]) \neq \tau([0, s])$.  
If $\sigma([0, s]) > \tau([0, s])$, then since $\tau$ is diffuse 
there exists $t > s$ such that $\sigma([0, s]) = \tau([0, t])$.  
Now apply Proposition~\ref{prop:_cutting-up_lamma_2} to find a diagonalizable morphism 
$\varphi \colon \langle \mathcal{Z}_{1, 1}, \tau \rangle 
\to \langle \mathcal{Z}_{p, q}, \tau \rangle$ such that the images of 
the maps $\xi_1, \ldots, \xi_k$ associated to a diagonal expression of $\varphi$ have
diameters less than $(t-s)/3$, and set 
\[
	S = \{\xi_i \mid \mathrm{Im} \xi_i \cap [0, s] \neq \varnothing\}, \qquad 
	T = \{\xi_i \mid \mathrm{Im} \xi_i \subseteq [0, t] \}.
\]
Then clearly $S \subsetneq T$, and since $\varphi \in E(\tau) \subseteq E(\sigma)$, 
it follows that
\[
	\sigma([0, s]) \leq \frac{\#S}{pq} < \frac{\#T}{pq} \leq \tau([0, t]) = \sigma([0, s]), 
\]
which is a contradiction.  
The inequality $\sigma([0, s]) < \tau([0, s])$ implies a similar contradiction, 
so $\sigma = \tau$.   
\end{proof}


\begin{prop}\label{prop:_jiang-su_algebra_is_monotracial}
Let $\langle \mathcal{Z}, \tau \rangle$ be the Fra\"{i}ss\'{e} limit of the class $\mathscr{K}$.  
Then $\tau$ is the unique trace on $\mathcal{Z}$.  
\end{prop}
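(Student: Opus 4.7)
The plan is to show that every trace $\sigma$ on $\mathcal{Z}$ equals $\tau$.  The central step, which parallels the proof of Lemma~\ref{lem:_trace_and_morphisms} but substitutes an auxiliary trace obtained from approximate homogeneity for a member of $E(\cdot)$, is the following claim: for any diffuse faithful measure $\tau_0$ on $[0,1]$ and any embedding $\iota \colon \langle \mathcal{Z}_{1,1}, \tau_0 \rangle \to \langle \mathcal{Z}, \tau \rangle$, one has $\sigma \circ \iota = \tau_0$ as measures on $[0,1]$.

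To prove this, I argue by contradiction.  Assume $(\sigma \circ \iota)([0,s]) > \tau_0([0,s])$ for some $s$ and, by diffuseness of $\tau_0$, choose $t > s$ with $(\sigma \circ \iota)([0,s]) = \tau_0([0,t])$.  Apply Proposition~\ref{prop:_cutting-up_lamma_2} to obtain a diagonalizable morphism $\varphi \colon \langle \mathcal{Z}_{1,1}, \tau_0 \rangle \to \langle \mathcal{Z}_{p,q}, \tau_0 \rangle$ whose associated maps $\xi_1, \ldots, \xi_{pq}$ have diameter less than $(t-s)/3$, and use $\mathscr{K}$-universality to fix $\iota' \colon \langle \mathcal{Z}_{p,q}, \tau_0 \rangle \to \langle \mathcal{Z}, \tau \rangle$.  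Approximate $\mathscr{K}$-homogeneity yields automorphisms $\alpha_n$ of $\mathcal{Z}$ with $\alpha_n \circ \iota' \circ \varphi(g) \to \iota(g)$ for every $g$ in a countable dense subset of $\mathcal{Z}_{1,1}$, and by weak-* compactness of the trace space of $\mathcal{Z}_{p,q}$ the traces $\sigma \circ \alpha_n \circ \iota'$ have a limit point $\rho^*$ satisfying $\rho^* \circ \varphi = \sigma \circ \iota$.  Writing $\rho^*$ via its associated measure $\mu^*$ on $[0,1]$ and using the diagonal form of $\varphi$ gives $\sigma \circ \iota = \frac{1}{pq} \sum_i (\xi_i)_* \mu^*$.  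Evaluating on continuous approximations of $\chi_{[0,s]}$ and $\chi_{[0,t]}$, and using the analogous identity $\tau_0 = \frac{1}{pq} \sum_i (\xi_i)_* \tau_0$ that comes from $\varphi$ being trace-preserving, one obtains $(\sigma \circ \iota)([0,s]) \leq \#S/pq < \#T/pq \leq \tau_0([0,t])$, where $S = \{ i : \mathrm{Im}\,\xi_i \cap [0,s] \neq \varnothing \}$ and $T = \{ i : \mathrm{Im}\,\xi_i \subseteq [0,t] \}$.  The strictness $\#S < \#T$ follows from faithfulness of $\tau_0$, which forces some $\xi_i$ to have image inside $(s + (t-s)/3, s + 2(t-s)/3] \subset (s,t]$; this contradicts $(\sigma \circ \iota)([0,s]) = \tau_0([0,t])$.

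To conclude $\sigma = \tau$, I extend the claim to arbitrary embeddings.  Given $\eta \colon \langle \mathcal{Z}_{p,q}, \tau' \rangle \to \langle \mathcal{Z}, \tau \rangle$, use Proposition~\ref{prop:_embeddability_lemma_2} to produce a morphism $\psi \colon \langle \mathcal{Z}_{p,q}, \tau' \rangle \to \langle \mathcal{Z}_{p,q}, \tilde{\tau} \rangle$ with $\tilde{\tau}$ diffuse faithful, universality to obtain an embedding $\iota_0 \colon \langle \mathcal{Z}_{p,q}, \tilde{\tau} \rangle \to \langle \mathcal{Z}, \tau \rangle$, and approximate homogeneity to produce, for any finite $G \subset \mathcal{Z}_{p,q}$ and $\varepsilon > 0$, an automorphism $\alpha$ of $\mathcal{Z}$ with $\|\alpha \circ \eta(h) - \iota_0 \circ \psi(h)\| < \varepsilon$ on $G$.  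Since $\sigma \circ \alpha^{-1}$ is again a trace on $\mathcal{Z}$, applying the claim to it and to the embedding $\iota_0 \circ j \colon \langle \mathcal{Z}_{1,1}, \mu_{\tilde{\tau}} \rangle \to \langle \mathcal{Z}, \tau \rangle$ with $j(f) = f \otimes 1$ gives $(\sigma \circ \alpha^{-1}) \circ \iota_0 \circ j = \mu_{\tilde{\tau}}$, which upgrades---because a trace on $\mathcal{Z}_{p,q}$ is determined by its associated measure on $[0,1]$---to $(\sigma \circ \alpha^{-1}) \circ \iota_0 = \tilde{\tau}$; hence $(\sigma \circ \alpha^{-1}) \circ \iota_0 \circ \psi = \tilde\tau \circ \psi = \tau'$ and $|\sigma(\eta(h)) - \tau'(h)| < \varepsilon$ for $h \in G$.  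Since $G$ and $\varepsilon$ are arbitrary, $\sigma \circ \eta = \tau'$, and the $\mathscr{K}$-structure property of $\mathcal{Z}$ then forces $\sigma = \tau$.  The main obstacle is the first step: Lemma~\ref{lem:_trace_and_morphisms} does not apply literally because $\rho^*$ may fail to be faithful and so need not define a member of $E(\sigma \circ \iota)$, but the inequality-based contradiction in the lemma's proof only uses that $\rho^*$ is a probability measure, so the argument still carries through.
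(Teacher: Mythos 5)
Your proposal is correct and runs on the same engine as the paper's own proof: the counting inequality from Lemma~\ref{lem:_trace_and_morphisms} applied to a small-diameter diagonalizable morphism from Proposition~\ref{prop:_cutting-up_lamma_2}, combined with approximate $\mathscr{K}$-homogeneity and a weak-$*$ limit point of the pulled-back traces $\sigma \circ \alpha_n \circ \iota'$. The differences are in the packaging, and your version is tighter at two points where the paper is loose. First, you inline the proof of Lemma~\ref{lem:_trace_and_morphisms} rather than invoking it, precisely because the limit trace $\rho^*$ need not be faithful, so $\varphi$ need not literally belong to $E(\sigma\circ\iota)$; your observation that the inequality $(\sigma\circ\iota)([0,s]) \leq \#S/pq < \#T/pq \leq \tau_0([0,t])$ only needs $\mu^*$ to be a probability measure (faithfulness and diffuseness being required only of $\tau_0 = \tau\circ\iota$, which supplies both the covering of $[0,1]$ by the images of the $\xi_i$ and the lower bound $\#T/pq \leq \tau_0([0,t])$) is exactly the repair that the paper's own proof, which asserts $\varphi \in E(\iota^*(\sigma))$ without checking faithfulness of $\rho'$, silently requires. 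Second, where the paper opens with the unjustified step ``there exists an embedding $\iota$ of $\mathcal{Z}_{1,1}$ with $\iota^*(\tau) \neq \iota^*(\sigma)$'' (which would moreover need $\iota^*(\tau)$ diffuse and faithful for the lemma to apply), you prove the claim only for embeddings with diffuse faithful pullback and then globalize through arbitrary members of $\mathscr{K}$, restriction to the center via $j(f) = f \otimes 1$, and the $\mathscr{K}$-structure property. This costs an extra bootstrapping paragraph but buys a proof valid for an arbitrary trace $\sigma$ rather than only a faithful one --- a genuine gain, since simplicity (Proposition~\ref{prop:smp}) is only established afterwards.
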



\begin{proof}
Let $\sigma$ be a faithful trace on $\mathcal{Z}$ and suppose $\sigma \neq \tau$.  
Then there exists an embedding $\iota \colon \mathcal{Z}_{1, 1} \to \mathcal{Z}$ such that 
$\iota^*(\tau) \neq \iota^*(\sigma)$.  
Now assume $\varphi \colon \langle \mathcal{Z}_{1, 1}, \iota^*(\tau) \rangle 
\to \langle \mathcal{Z}_{p, q}, \rho \rangle$ is 
in $E(\iota^*(\tau))$, and find a morphism 
$\psi \colon \langle \mathcal{Z}_{p, q}, \rho \rangle \to \langle \mathcal{Z}, \tau \rangle$.  
Since $\langle \mathcal{Z}, \tau \rangle$ is approximately $\mathscr{K}$-homogeneous, 
for any finite $G \subseteq \mathcal{Z}_{1, 1}$ and any $\varepsilon > 0$ 
there exists an automorphism $\alpha_{G, \varepsilon}$ of 
$\langle \mathcal{Z}, \tau \rangle$ such that 
$\|\alpha \circ \psi \circ \varphi (g) - \iota(g)\| < \varepsilon$ for all $g \in G$.  
Set $\rho'_{G, \varepsilon} := (\psi \circ \alpha_{G, \varepsilon})^*(\sigma)$ and let $\rho'$ be 
the limit of $\{\rho'_{G, \varepsilon}\}$ in $T(\mathcal{Z}_{p, q})$.  
Then it is clear that $\varphi \colon \langle \mathcal{Z}_{1, 1}, \iota^*(\sigma) \rangle \to 
\langle \mathcal{Z}_{p, q}, \rho' \rangle$ is trace-preserving, 
so $\varphi \in E(\iota^*(\sigma))$, which is a contradiction.  
\end{proof}


\begin{prop}\label{prop:smp}
The Fra\"{i}ss\'{e} limit $\langle \mathcal{Z}, \tau \rangle$ is simple.  
\end{prop}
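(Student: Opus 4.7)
The plan is to show that any nonzero closed two-sided ideal $I$ of $\mathcal{Z}$ equals $\mathcal{Z}$.  Picking a positive nonzero $a \in I$, the $\mathscr{K}$-structure property of $\mathcal{Z}$ provides a morphism $\iota \colon \langle \mathcal{Z}_{p, q}, \tau' \rangle \to \langle \mathcal{Z}, \tau \rangle$ and a positive $b \in \mathcal{Z}_{p, q}$ with $\|a - \iota(b)\| < \varepsilon$; by R{\o}rdam's lemma, $(\iota(b) - \varepsilon)_+ = \iota\bigl((b-\varepsilon)_+\bigr)$ lies in the closed ideal generated by $a$, hence in $I$.  Writing $c := (b - \varepsilon)_+$, for $\varepsilon$ small enough $c \in \mathcal{Z}_{p, q}$ is positive and nonzero, and it suffices to prove that $\iota(c)$ is a \emph{full} element of $\mathcal{Z}$---in which case the closed two-sided ideal it generates, contained in $I$, is all of $\mathcal{Z}$.

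To this end, I would produce a morphism $\rho \colon \langle \mathcal{Z}_{p, q}, \tau' \rangle \to \langle \mathcal{Z}_{p', q'}, \sigma \rangle$, by iterating Proposition~\ref{prop:_cutting-up_lamma_2}, whose associated diagonal maps $\xi_1, \ldots, \xi_K$ satisfy the following \emph{density property}: for every $s \in [0, 1]$, some $\xi_i(s)$ lies in the open set $\{t \in [0, 1] : c(t) \neq 0\}$.  Then $\rho(c)(s) \neq 0$ for every $s$, and since the irreducible representations of $\mathcal{Z}_{p', q'}$ are (up to unitary equivalence) precisely the evaluation maps $\mathrm{ev}_s$ for $s \in [0, 1]$, $\rho(c)$ is full in $\mathcal{Z}_{p', q'}$.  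Now $\mathscr{K}$-universality provides an embedding $\psi \colon \langle \mathcal{Z}_{p', q'}, \sigma \rangle \to \langle \mathcal{Z}, \tau \rangle$; since $\psi$ is unital, $1_{\mathcal{Z}} = \psi(1_{\mathcal{Z}_{p', q'}}) \in \overline{\mathcal{Z}\, \psi(\rho(c))\, \mathcal{Z}}$, so $\psi(\rho(c))$ is full in $\mathcal{Z}$.  Both $\iota$ and $\psi \circ \rho$ are morphisms from $\langle \mathcal{Z}_{p, q}, \tau' \rangle$ into $\langle \mathcal{Z}, \tau \rangle$, so approximate $\mathscr{K}$-homogeneity supplies an automorphism $\alpha$ of $\mathcal{Z}$ with $\|\alpha(\iota(c)) - \psi(\rho(c))\|$ arbitrarily small.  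Fullness is stable under sufficiently small perturbations (if $\|\sum_j x_j\, \psi(\rho(c))\, y_j - 1\| < 1/2$, then taking $\alpha$ close enough makes the analogous sum for $\alpha(\iota(c))$ lie within $1$ of $1$, hence invertible), so $\alpha(\iota(c))$ is full; as $\alpha$ is an automorphism, $\iota(c)$ itself is full, and $I = \mathcal{Z}$.

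The main obstacle is verifying the density property in the construction of $\rho$.  A single application of Proposition~\ref{prop:_cutting-up_lamma_2} subdivides $[0, 1]$ into three intervals $I_1, I_2, I_3$ of widths close to $1/3$, and the three associated surjections $\eta_1, \eta_2, \eta_3$ onto these intervals produce, at each point $s$, one value in each $I_j$.  Iterating $d$ times, the associated maps are compositions of the $\eta_j$'s across stages, yielding $3^d$ values at each $s$; one would argue by induction that these values meet every level-$d$ subinterval of $[0, 1]$---i.e., they are $(1/3)^d$-dense in $[0, 1]$---so that for $d$ chosen large enough that $(1/3)^d$ is smaller than the length of some subinterval of the support of $c$, some $\xi_i(s)$ must land there.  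The inductive step requires careful tracking of how the $\eta_j$, which under the pinching condition are non-monotonic at the boundary with $\eta_1(0) = \eta_1(1) = 0$ and $\eta_3(0) = \eta_3(1) = 1$, compose and distribute their images, and this constitutes the technical heart of the proof.
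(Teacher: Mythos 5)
Your argument is correct in outline but takes a genuinely different route from the paper. The paper argues at the level of ideals: for each embedding $\iota \colon \mathcal{Z}_{1,1} \to \mathcal{Z}$ it considers the closed set $\Sigma_\iota \subseteq [0,1]$ corresponding to the pulled-back ideal $\mathcal{Z}_{1,1} \cap \iota^{-1}[\mathcal{I}]$, and uses Proposition~\ref{prop:_cutting-up_lamma_2} together with approximate $\mathscr{K}$-homogeneity to show that $\Sigma_\iota$ meets every $4\varepsilon$-ball for every $\varepsilon > 0$, hence equals $[0,1]$, forcing $\mathcal{I} = 0$. You argue at the level of elements: a nonzero positive element of the ideal is pulled back (via R{\o}rdam's lemma) into the image of a single $\mathcal{Z}_{p,q}$ and shown to be full by pushing it through a cutting-up morphism and invoking homogeneity plus stability of fullness under small perturbations. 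Both proofs rest on the same two pillars, namely Proposition~\ref{prop:_cutting-up_lamma_2} and approximate homogeneity; yours is the standard ``every nonzero positive element becomes full once the eigenvalue maps are spread out'' argument for inductive limits, at the cost of importing R{\o}rdam's lemma, while the paper's avoids that lemma but must instead track ideals of embedded copies of $C[0,1]$.

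The step you flag as the technical heart---the density property of the maps $\xi_1, \ldots, \xi_K$---is in fact immediate and needs no induction on the tree of subintervals. Proposition~\ref{prop:_cutting-up_lamma_2} already delivers a diagonal expression whose associated maps have images of diameter less than $\varepsilon$, and the paper records, right after the definition of a diagonal expression, that the union of these images is all of $[0,1]$ because morphisms are faithful. So if the open set $\{t \in [0,1] : c(t) \neq 0\}$ contains a ball of radius $\varepsilon$ about some point $t^*$, choose $i_0$ with $t^* \in \mathrm{Im}\,\xi_{i_0}$; then $\mathrm{Im}\,\xi_{i_0} \subseteq \{c \neq 0\}$, and hence $\rho(c)(s) \neq 0$ for every $s$. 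You do not need the $3^d$ values at each point to be dense; a single map whose entire image lies in the support of $c$ suffices. The one point to watch is that Proposition~\ref{prop:_cutting-up_lamma_2} requires the trace $\tau' = \iota^*(\tau)$ to be diffuse; if it is not, precompose with the morphism of Proposition~\ref{prop:_embeddability_lemma_2} into $\langle \mathcal{Z}_{p,q}, \lambda \rangle$ and shrink $\varepsilon$ according to the modulus of continuity of the resulting reparametrization (the paper's own proof silently makes the same assumption).
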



\begin{proof}
Suppose $\mathcal{I} \subseteq \mathcal{Z}$ be a non-trivial ideal.  
For each embedding $\iota \colon \mathcal{Z}_{1, 1} \to \mathcal{Z}$, 
let $\Sigma_\iota$ be the closed subset of $[0, 1]$ which corresponds to the ideal 
$\mathcal{I}_\iota := \mathcal{Z}_{1, 1} \cap \iota^{-1}[\mathcal{I}]$.   
For each $\varepsilon > 0$, choose a function $f_\iota^\varepsilon \in \mathcal{I}_\iota$ such that 
$|f(t)| \geq \varepsilon$ holds if $\mathrm{dist}(t, \Sigma_\iota) \geq \varepsilon$.  
Now, by Proposition~\ref{prop:_cutting-up_lamma_2}, there exists a diagonalizable morphism 
$\varphi$ such that the images of the maps $\xi_1, \ldots, \xi_k$ associated to 
a diagonal expression of $\varphi$ have diameters less than $\varepsilon$, 
and as in the proof of Proposition~\ref{prop:_jiang-su_algebra_is_monotracial}, 
we can find an embedding $\eta \colon \mathcal{Z}_{1, 1} \to \mathcal{Z}$ 
which factors through $\varphi$ and satisfies 
$\|\iota(f_\iota^\varepsilon) - \eta(f_\iota^\varepsilon)\| < \varepsilon$.  
Then $\Sigma_\eta$ is included in the $\varepsilon$-neighborhood of $\Sigma_\iota$, 
since $\mathrm{dist}(f_\iota^\varepsilon, \mathcal{I}_\eta) 
= \mathrm{dist}(\eta(f_\iota^\varepsilon), \mathcal{I}) < \varepsilon$ 
(see the proof of \cite[Lemma {III}.4.1]{davidson96:_cstar_algebras}, for example).  
On the other hand, clearly $\Sigma_\eta \cap \mathrm{Im}\xi_i$ is non-empty for all $i$, 
so $\Sigma_\iota$ intersects with every $4\varepsilon$-ball.  
Since this is true for any $\varepsilon > 0$ and any $\iota$, 
it follows that $\Sigma_\iota$ is equal to $[0, 1]$ for any $\iota$, so $\mathcal{I} = 0$.  
\end{proof}


\bigskip
\noindent
\textbf{Acknowledgement.}
The author wishes to express his thanks to Professor Ilijas Farah and Professor George Elliott
for many stimulating conversations and helpful advices, 
and Alessandro Vignati for many corrections.  
The author also would like to thank the Fields Institute for Research 
in Mathematical Sciences, where the paper is written, for the hospitality.  
This work was supported by Research Fellow of the JSPS (no.~26-2990) and 
the Program for Leading Graduate Schools, MEXT, Japan.


\end{document}